\definecolor{labelkey}{rgb}{0,0,1}
\numberwithin{equation}{section}
\newcounter{exercise}
\newtheorem{theorem}{Theorem}
\newtheorem{lemma}[theorem]{Lemma}
\theoremstyle{definition}
\theoremstyle{remark}
\newtheorem{remark}[theorem]{Remark}
\newcommand\N{{\mathbb N}}
\newcommand\R{{\mathbb R}}
\newcommand{\cQ}{\mathcal Q}
\newcommand{\cR}{\mathcal R}
\def\eps{{\varepsilon}}
\DeclareMathOperator{\supp}{supp}
\DeclareMathOperator{\osc}{osc}
\newcommand{\dd}{{\, \mathrm d}}
\newcommand{\n}[1]{{\left\| #1 \right\|}}
\newcommand{\mk}{\medskip}
\let\oldmarginpar\marginpar
\renewcommand\marginpar[1]{\-\oldmarginpar[\raggedleft\footnotesize #1]%
{\raggedright\footnotesize #1}}
\def\signci{\bigskip \begin{center} {\sc Cyril Imbert\par\vspace{3mm}
      CNRS \& Universit\'e de Paris-Est Cr\'eteil\par
      UMR 8050, LAMA\par
      61, avenue du G\'en\'eral de Gaulle 94010 Cr\'eteil,
      France\par\vspace{3mm} e-mail:}
    \tt{cyril.imbert@math.cnrs.fr} \end{center}}
\def\signcm{\bigskip \begin{center} {\sc Cl\'ement
      Mouhot\par\vspace{3mm}
      University of Cambridge\par
      DPMMS, Centre for Mathematical Sciences\par
      Wilberforce road, Cambridge CB3 0WA, UK
      \par\vspace{3mm} e-mail:}
    \tt{C.Mouhot@dpmms.cam.ac.uk} \end{center}}
\begin{document}

\title[H\"older continuity for hypoelliptic
equations]{H\"older continuity of solutions to 
  hypoelliptic equations with bounded measurable coefficients}

\author{C. Imbert \& C. Mouhot} \thanks{The authors would like to
  thank Luis Silvestre for fruitful comments on a preliminary version
  of this article. They also thank Fran\c{c}ois Golse for the idea used
in Lemma~\ref{lem:isoperim} to pass to the limit in the equation.}

\date{\today}

\keywords{Hypoelliptic equations, Moser iteration, De Giorgi method, H\"older continuity, averaging lemma}
\subjclass{35H10, 35B65}

\begin{abstract}
  We prove that $L^2$ weak solutions to  hypoelliptic
  equations with bounded measurable coefficients are H\"older
  continuous. The proof relies on classical techniques developed by De
  Giorgi and Moser together with the averaging lemma and regularity
  transfers developed in kinetic theory. The latter tool is used
  repeatedly: first in the proof of the local gain of integrability of
  sub-solutions; second in proving that the gradient with respect to
  the velocity variable is $L^{2+\eps}_{\mathrm{loc}}$; third, in the
  proof of an ``hypoelliptic isoperimetric De Giorgi lemma''. To get
  such a lemma, we develop a new method which combines the classical
  isoperimetric inequality on the diffusive variable with the
  structure of the integral curves of the first-order part of the
  operator. It also uses that the gradient of solutions w.r.t. $v$ is
  $L^{2+\eps}_{\mathrm{loc}}$.
\end{abstract}

\maketitle

\tableofcontents

\section{Introduction}
\label{sec:introduction}

\subsection{The question studied and its history}

We consider the following nonlinear kinetic Fokker-Planck equation
\begin{equation}
  \label{eq:1}
  \partial_t f + v \cdot \nabla_x f = \rho \, \nabla_v \cdot \left( \nabla_v
    f + v f \right), \quad t \ge 0, \ x \in \R^d, \ v  \in \R^d, 
\end{equation}
(with or without periodicity conditions with respect to the space
variable) where $d \in \N^*$, $f = f(t,x,v) \ge 0$ and $\rho[f] =
\int_{\R^d} f(t,x,v) \dd v$. The construction of global smooth
solutions for such a problem is one motivation for the present paper.

The linear kinetic Fokker-Planck equation
$\partial_t f + v \cdot \nabla_x f = \nabla_v \cdot \left( \nabla_v f
  + v f \right)$
is sometimes called the Kolmogorov-Fokker-Planck equation, as it was
studied by Kolmogorov in the seminal paper \cite{MR1503147}, when
$x \in \R^d$. In this note, Kolmogorov explicitely calculated the
fundamental solution and deduced regularisation in both variables $x$
and $v$, even though the operator
$\nabla_v \cdot (\nabla_v + v) - v \cdot \nabla_x$ shows ellipticity
in the $v$ variable only. It inspired H\"ormander and his theory of
hypoellipticity \cite{MR0222474}, where the regularisation is
recovered by more robust and more geometric commutator estimates (see
also \cite{MR0436223}).

Another question which has attracted a lot of attention in calculus of
variations and partial differential equations along the 20th century
is Hilbert's 19th problem about the analytic regularity of solutions
to certain integral variational problems, when the quasilinear
Euler-Lagrange equations satisfy ellipticity conditions. Several
previous results had established the analyticity conditionally to some
differentiability properties of the solution, but the full answer came
with the landmark works of De Giorgi \cite{MR0082045,MR0093649} and
Nash \cite{MR0100158}, where they prove that  any solution to these
variational problems with square integrable derivative is
analytic. More precisely their key contribution is the
following\footnote{We give the parabolic version due to Nash here.}:
reformulate the quasilinear parabolic problem as 
\begin{equation}
  \label{eq:2}
  \partial_t f = \nabla_v \left( A(v,t) \nabla_v f \right), \quad t \ge 0,
  \ v \in \R^d
\end{equation}
with $f = f(v,t) \ge 0$ and $A = A(v,t)$ satisfies the ellipticity
condition $0 < \lambda I \le A \le \Lambda I$ for two constants
$\lambda,\Lambda >0$ but is, besides that, merely measurable. Then the
solution $f$ is H\"older continuous.  \mk

In view of the nonlinear (quasilinear) equation \eqref{eq:1} it is
natural to ask whether a similar result as the one of De Giorgi-Nash
holds for quasilinear hypoelliptic equations. More precisely, we consider
the following Fokker-Planck equation
\begin{equation}
  \label{eq:main}
  \partial_t f + v \cdot \nabla_x f = \nabla_v \cdot \left(A(x,v,t) \nabla_v
    f \right), \quad t \in (0,T), (x,v) \in \Omega, 
\end{equation}
where $\Omega$ is an open set of $\R^{2d}$, $f = f(t,x,v) \ge 0$ and
the $d \times d$ symmetric matrix $A$ satisfies the ellipticity
condition
\begin{equation}\label{eq:ellipticity}
0 < \lambda I \le A \le \Lambda I
\end{equation} 
for two constants $\lambda,\Lambda$ but is, besides that, merely
measurable. We want to establish the H\"older continuity of $L^2$
solutions to this problem. In order to do so, we first prove that
$L^2$ sub-solutions are locally bounded; we refer to such a result as
an $L^2-L^\infty$ estimate. We then prove that solutions are H\"older
continuous by proving a lemma which is an hypoelliptic counterpart of
De Giorgi's isoperimetric lemma.  
\medskip

Given $z_0= (x_0,v_0,t_0) \in \R^{2d+1}$, $Q =Q_r(z_0)$ denotes a
cylinder centered at $z_0$ of ``radius'' r: it is defined as $Q =
B_{r^3}(x_0) \times B_r(v_0) \times (t_0-R^2,t_0]$ where $B_r(x_0)$
and $B_r(v_0)$ denote the usual Euclidian balls in $x$ and $v$.
\begin{theorem}[H\"older continuity]\label{thm:holder}
  Let $f$ be a solution of \eqref{eq:main} in $Q_0 =Q(z_0,R_0)$ and
  $Q_1 =Q(z_0,R_1)$ with $R_1 < R_0$. Then $f$ is $\alpha$-H\"older
  continuous with respect to $(x,v,t)$ in $Q_1$ and
  \[ \| f \|_{C^\alpha(Q_1)} \le C \|f \|_{L^2(Q_0)}\] for some
  $\alpha$ universal, i.e. $\alpha=\alpha(d,\lambda,\Lambda)$, and $C
  = C(d,\lambda,\Lambda,Q_0,Q_1)$.
\end{theorem}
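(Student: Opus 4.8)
The plan is to follow the classical De Giorgi strategy, adapted to the hypoelliptic setting, in which H\"older continuity is obtained by iterating a decay-of-oscillation estimate over a nested sequence of rescaled cylinders. The engine of the argument will be the ``hypoelliptic isoperimetric De Giorgi lemma'' advertised in the abstract: a quantitative statement asserting that if a sub-solution $f$ on a cylinder $Q$ lies between $0$ and $1$, and if $f$ is neither too small on a set of definite measure (``$f\le 1/2$ on a portion of $Q$'') nor too large on a set of definite measure (``$f\ge 1-\delta$ on a portion of $Q$''), then the ``intermediate'' set $\{1/2 < f < 1-\delta\}$ must itself have measure bounded below by a universal constant. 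Granting this lemma, the proof proceeds by a dichotomy: rescaling and translating so that the oscillation of $f$ on $Q_0$ is normalised, one of $f$ or $-f$ (suitably affinely renormalised) is a sub-solution taking values in $[0,1]$ with $\{f\le 1/2\}$ occupying at least half of the cylinder. One then applies the isoperimetric lemma repeatedly along a sequence of thresholds $1-2^{-k}$; at each stage either the measure of $\{f\le 1/2\}$ stays large and we proceed, or a ``good'' configuration is reached. Since the intermediate sets are disjoint and each has measure bounded below, only finitely many such steps are possible, which forces, after finitely many iterations, the hypothesis of the $L^2$--$L^\infty$ estimate (the local boundedness of sub-solutions proved earlier) to apply in a form that yields $\sup f \le 1-\theta$ on a smaller cylinder for a universal $\theta>0$. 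This is precisely a decay of oscillation: $\osc_{Q_1} f \le (1-\theta)\,\osc_{Q_0} f$ up to the affine normalisation.

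Concretely, the key steps I would carry out are: \textbf{(1)} Fix the scaling. Exploit the invariance of \eqref{eq:main} under the parabolic-type dilations $(t,x,v)\mapsto(r^2 t, r^3 x, r v)$ and under the Galilean-type shears associated with the transport field $v\cdot\nabla_x$, so that the cylinders $Q_r(z_0)$ form a scale-invariant family and all constants in the $L^2$--$L^\infty$ bound and in the isoperimetric lemma are genuinely universal. \textbf{(2)} State and prove the decay-of-oscillation lemma: there exist universal $\lambda_0\in(0,1)$ and $r_0\in(0,1)$ such that any solution on $Q_1(z_0)$ satisfies $\osc_{Q_{r_0}(z_0)} f \le \lambda_0 \,\osc_{Q_1(z_0)} f$. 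This is where the isoperimetric lemma and the $L^2$--$L^\infty$ estimate are combined as above. \textbf{(3)} Iterate: applying step (2) on the nested cylinders $Q_{r_0^k R}(z_0)$ gives $\osc_{Q_{r_0^k R}(z_0)} f \le \lambda_0^k\,\osc_{Q_R(z_0)} f$, and a standard interpolation between consecutive radii upgrades this geometric decay into a H\"older modulus of continuity with exponent $\alpha = \log\lambda_0/\log r_0$ at the center point. \textbf{(4)} Since $z_0$ was arbitrary in $Q_1$, and using that the osc at scale $R_0$ is controlled by $\|f\|_{L^2(Q_0)}$ via the $L^2$--$L^\infty$ estimate, one upgrades pointwise H\"older regularity at each center into the uniform bound $\|f\|_{C^\alpha(Q_1)}\le C\|f\|_{L^2(Q_0)}$, the passage from pointwise moduli to the $C^\alpha$ seminorm being routine once the estimate is translation- and scale-uniform on the whole subcylinder. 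The dependence of $C$ on $Q_0,Q_1$ enters only through the ratio $R_0/R_1$ and the number of iteration steps needed to descend from $R_0$ to $R_1$.

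The main obstacle — and the genuinely new content of the paper — is step (2), specifically the hypoelliptic isoperimetric lemma underlying it. In the classical elliptic/parabolic setting De Giorgi's isoperimetric inequality is applied directly to the spatial slices of the solution, because $f$ has a full spatial gradient in $L^2$. Here the equation only provides coercivity in $v$: one controls $\nabla_v f$ in $L^2$, but \emph{not} $\nabla_x f$, so the naive isoperimetric inequality on $Q\subset\R^{2d}$ is unavailable. The resolution, following the strategy flagged in the abstract, is to run the isoperimetric inequality only in the $v$-variable and then to transport the information along the integral curves of the first-order operator $\partial_t + v\cdot\nabla_x$ — i.e. along the characteristics $t\mapsto(t, x_0+(t-t_0)v_0, v_0)$ — using the equation to bound the variation of $f$ along these curves. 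Making this rigorous requires the improved integrability $\nabla_v f \in L^{2+\eps}_{\mathrm{loc}}$ (itself obtained earlier via the averaging lemma and regularity transfer), because one needs to integrate $\nabla_v f$ along lower-dimensional characteristic tubes, where a bare $L^2$ bound does not suffice by Fubini. Handling the interplay between the isoperimetric estimate in $v$, the characteristic flow in $(x,t)$, and the $L^{2+\eps}$ gradient bound — and keeping all constants universal under the scaling of step (1) — is the technical heart of the proof; everything else is a careful but standard De Giorgi iteration.
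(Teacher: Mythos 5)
Your high-level architecture is the paper's: an $L^2$--$L^\infty$ bound from Moser iteration, a decay-of-oscillation lemma built on a De Giorgi--type ``intermediate-set'' estimate, and the standard iteration on nested cylinders with the hypoelliptic scaling to produce the $C^\alpha$ modulus. Your dichotomy/iteration scheme reducing decay of oscillation to the intermediate-set lemma (via repeated affine renormalisations $f\mapsto 2f-1$) is also exactly what the appendix of the paper does. Where you go off-track is precisely the step you flag as the heart of the matter.

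The paper does \emph{not} prove the intermediate-set lemma by a direct, quantitative argument that runs the isoperimetric inequality slice-wise in $v$ and then ``transports the information along characteristics''; indeed it is unclear how such a quantitative transport argument could be made to work, since the equation has a rough second-order term in $v$ and one has no pointwise control along a single characteristic. Instead the paper argues by \emph{contradiction and compactness}: given a sequence $f_k$ with $|\{0<f_k<\tfrac12\}\cap Q_1|\to 0$, the $H^s_{x,v,t}$ regularity estimate of Theorem~\ref{thm:gain-diff} (valid for signed \emph{solutions}, crucially not for sub-solutions --- this is why the paper emphasises that Lemma~\ref{lem:isoperim} is stated for solutions, whereas you phrase it for sub-solutions) gives strong $L^2$ compactness; the classical one-variable isoperimetric inequality is applied to the \emph{limit} to deduce that a truncation $\bar f$ satisfies $\nabla_v \bar f=0$, i.e.\ $\bar f$ is a local equilibrium; one then passes to the limit in the weak formulation to show $\bar f$ solves a pure transport equation $(\partial_t+v\cdot\nabla_x)\bar f=0$, and the structure of the characteristics is used only at this last step to conclude $\bar f$ is constant, contradicting the non-degeneracy of the two level sets. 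Your stated reason for needing $\nabla_v f\in L^{2+\eps}_{\mathrm{loc}}$ (``Fubini along characteristic tubes'') is also not the one that is operative: the $L^{2+\eps}$ bound is used to gain equi-integrability of $|\nabla_v f_k|^2$, so that $\int_{B_k}|\nabla_v f_k|^2\to 0$ when $|B_k|\to 0$, which is what makes the lower-order term $T''(f_k)A_k\nabla_v f_k\cdot\nabla_v f_k$ vanish in the passage to the limit. So your proposal correctly locates the difficulty and the remaining scaffolding, but the proof sketch for the key lemma describes a method that is not the one used and would not, as stated, close; you are missing the compactness mechanism, the role of the $H^s$ estimate for solutions, and the actual use of the $L^{2+\eps}$ gradient bound.
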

In \cite{pp}, the authors obtain an $L^2-L^\infty$ estimate with
completely different techniques; however
they cannot reach the H\"older continuity estimate. Our techniques
rely on averaging lemmas \cite{golse,gps} in order to gain some
regularity $H^s_x$, $s>0$ small, in the space variable $x$ from the
natural $H^1_v$ estimate. We emphasize that such $H^s_x$ estimates do
not hold for sub-solutions. From this Sobolev estimate, we can recover
a gain of integrability for $L^2$ sub-solutions, and we then prove the
H\"older continuity through a De Giorgi type argument on the decrease
of oscillation for solutions.

In \cite{wz09,wz11}, the authors get a H\"older estimate for $L^2$
weak solutions of so-called ultraparabolic equations, including
\eqref{eq:main}. Their proof relies on the construction of cut-off
functions and a particular form of weak Poincar\'e inequality
satisfied by non-negative weak sub-solutions.  Our paper proposes a
new, short and simple strategy, that, we hope, sheds new light on the
regularizing effect for hypoelliptic equations with bounded measurable
coefficients and provide tools for further applications. 

We finally mention that Golse and Vasseur proved independently a
similar result \cite{gv}.

\subsection{Plan of the paper}
\label{sec:plan-paper}

In Section~\ref{sec:local-gain-integr}, we first explain how to get a
universal gain of regularity for (signed) $L^2$ solutions; we then
exhibit a universal gain of integrability for non-negative $L^2$
sub-solutions; we finally explain how to derive from this gain of
integrability a local upper bound of such non-negative $L^2$
sub-solutions by using Moser iteration procedure. In
Section~\ref{sec:l2+eps}, we prove that the $v$-gradient of solutions
is $L^{2+\eps}_{\mathrm{loc}}$.  In Section~\ref{sec:osc-decrease},
the H\"older estimate is derived by proving a reduction of oscillation
lemma.

\section{Local gain of regularity / integrability}
\label{sec:local-gain-integr}

We consider the equation \eqref{eq:main} and we want to establish a
local gain of integrability of solutions in order to apply Moser's
iteration and get a local $L^\infty$ bound. Since we will need to
perform convex changes of unknown, it is necessary to obtain this gain
even for (non-negative) \emph{sub-solutions}.

In the two following theorems, we consider cylinders with a scaling
corresponding to the hypoelliptic structure of the equation. For $z_0
= (x_0,v_0,t_0) \in \R^{2d+1}$,
\[ Q_R (z_0)= B_{R^3} (x_0) \times B_R (v_0) \times (t_0-R^2,t_0].\]
The next theorem is stated in cylinders centered at the origin. 
\begin{theorem}[Gain of integrability for non-negative sub-solutions]\label{thm:gain}
  Consider two cylinders $Q_1 =Q_{R_1}(0)$ and $Q_0=Q_{R_0}(0)$ with
  $R_1 < R_0$.  There exists $q>2$ (universal) such that for all
  non-negative $L^2$ sub-solution $f$ of \eqref{eq:main} in $Q_0$, we
  have
\begin{equation}
  \label{eq:gain}
  \n{f}_{L^q (Q_1)} \le C  \n{f}_{L^2(Q_0)}
\end{equation}
where 
\[
C = \bar C\left(\frac1{R_0^2-R_1^2}+\frac{R_0}{R_0^3-R_1^3}+\frac1{(R_0-R_1)^2}\right)
\]
and $\bar C = C(d,\lambda,\Lambda)$.
\end{theorem}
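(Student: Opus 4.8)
The plan follows the classical first step of a De Giorgi/Moser iteration: a local energy estimate, then a regularity gain produced by the averaging lemma, combined by a Gagliardo--Nirenberg inequality in the velocity variable. Throughout, fix a chain of cutoffs interpolating between $Q_1$ and $Q_0$; write $\chi=\chi(x,v,t)$ for a typical one, with $\chi\equiv1$ on $Q_1$, $\supp\chi\subset Q_0$, $|\partial_t\chi|\lesssim(R_0^2-R_1^2)^{-1}$, $|\nabla_x\chi|\lesssim(R_0^3-R_1^3)^{-1}$, $|\nabla_v\chi|\lesssim(R_0-R_1)^{-1}$, and note $|v|\lesssim R_0$ on $\supp\chi$. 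First I would establish the local energy estimate: testing the sub-solution inequality $\partial_tf+v\cdot\nabla_xf-\nabla_v\cdot(A\nabla_vf)\le0$ against the non-negative function $f\chi^2$ and integrating by parts — the transport term gives $-\tfrac12\int f^2(\partial_t+v\cdot\nabla_x)\chi^2$ plus a favorable slice at the top time, the diffusion term gives $-\int\chi^2A\nabla_vf\cdot\nabla_vf-2\int f\chi\,A\nabla_vf\cdot\nabla_v\chi$ — then using \eqref{eq:ellipticity} and Young's inequality to absorb the cross term into $\tfrac\lambda2\int\chi^2|\nabla_vf|^2$ yields
\[
\sup_t\int f^2\chi^2\,dx\,dv+\lambda\iint\chi^2|\nabla_vf|^2\le C_0\,\|f\|_{L^2(Q_0)}^2,\qquad C_0:=\bar C\Big(\tfrac1{R_0^2-R_1^2}+\tfrac{R_0}{R_0^3-R_1^3}+\tfrac1{(R_0-R_1)^2}\Big),
\]
the three terms of $C_0$ coming from $\partial_t\chi^2$, $v\cdot\nabla_x\chi^2$ and $|\nabla_v\chi|^2$. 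In particular $g:=f\chi^2$ has $\nabla_vg\in L^2(\R^{2d+1})$ and $g\in L^\infty_tL^2_{x,v}$, with norms controlled by the right-hand side of this estimate.

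Next I would feed $g$ into the averaging lemma. From the equation $g$ solves $\partial_tg+v\cdot\nabla_xg=\nabla_v\cdot G_1+G_0-\chi^2\mu$, where $\mu\ge0$ is the defect measure of the sub-solution, $G_1=\chi^2A\nabla_vf$ and $G_0=f(\partial_t+v\cdot\nabla_x)\chi^2-2\chi\,A\nabla_vf\cdot\nabla_v\chi$ are $L^2$ functions controlled (via the chain of cutoffs) by the energy estimate, and the non-negative measure $\chi^2\mu$ has total mass $\langle\mu,\chi^2\rangle$ also bounded by the energy estimate. Applying the averaging lemma \cite{golse,gps} to $g$, the velocity averages $\int g\,\psi(v)\,dv$ gain a strictly positive amount of fractional regularity in $(t,x)$. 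This is the \emph{main obstacle}: the source contains the measure $\chi^2\mu$, which, averaged against $\psi(v)$, becomes a bounded non-negative measure in $(t,x)$, so one has to use the form of the averaging lemma that allows measure (or $L^1$) data; it still provides a positive gain, only with a smaller exponent. This is also why one works with velocity averages rather than with $g$ itself: an $H^s_x$ estimate on a genuine sub-solution fails (as emphasized in the introduction), whereas $\int g\,\psi\,dv$ survives because the bad part of the source has been integrated out. The conclusion is $\int g\,dv\in L^{p_0}_{t,x}$ for some universal $p_0=p_0(d)>2$, with norm controlled by the energy estimate (interpolating the gained fractional regularity against $g\in L^\infty_tL^2_{x,v}$).

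Finally I would close the estimate by a Gagliardo--Nirenberg inequality in $v\in\R^d$: pointwise in $(t,x)$,
\[
\|g(t,x,\cdot)\|_{L^q_v}\lesssim\|\nabla_vg(t,x,\cdot)\|_{L^2_v}^{\theta}\Big(\int g(t,x,v)\,dv\Big)^{1-\theta},\qquad\theta=\tfrac{2d(q-1)}{(d+2)q}\in(0,1),
\]
valid for $2\le q<2+\tfrac2d$, using $g\ge0$ so that $\|g(t,x,\cdot)\|_{L^1_v}=\int g\,dv$. Raising to the power $q$, integrating in $(t,x)$, and applying H\"older's inequality with the pair fixed by the requirement $\theta q<2$ turns the first factor into a power of $\|\nabla_vg\|_{L^2(\R^{2d+1})}$ and the second into a power of $\|\int g\,dv\|_{L^p_{t,x}}$ with $p=\tfrac{2(1-\theta)q}{2-\theta q}$; since $p\to2$ as $q\to2^+$ while $p_0>2$ is fixed, one can take $q>2$ with $p\le p_0$ (and then $\|\int g\,dv\|_{L^p_{t,x}}\lesssim\|\int g\,dv\|_{L^{p_0}_{t,x}}$ on the bounded support). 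This gives $\|g\|_{L^q(\R^{2d+1})}\le C\|f\|_{L^2(Q_0)}$ for a universal exponent $q>2$; as $g=f$ on $Q_1$, this is \eqref{eq:gain}, and since all the Gagliardo--Nirenberg, H\"older, Sobolev and averaging constants are universal, $C$ is governed by $C_0$ and comes out in the stated form (enlarging the universal constant $\bar C$ if necessary). To summarize: the energy estimate and the Gagliardo--Nirenberg/H\"older closure are routine once the cutoff geometry is chosen to generate the three terms of $C_0$; the whole difficulty is the averaging step for a sub-solution — extracting a genuine gain of $x$-integrability for $\int g\,dv$ in the presence of the non-negative defect measure.
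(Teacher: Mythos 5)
Your setup is on the right track --- the energy estimate with the cutoff geometry producing the three pieces of $C_0$ is essentially the paper's Lemma~\ref{lem:caccio}, and closing the loop by an interpolation in $v$ against the velocity averages is also how the paper finishes (via Lemma~\ref{lem:gain-x} and a Sobolev/interpolation step). The problem is the step you yourself flag as the main obstacle.

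You propose to feed $g=f\chi^2$ directly into the averaging machinery, absorbing the non-negative defect measure $\chi^2\mu$ of the sub-solution into the source and then invoking ``the form of the averaging lemma that allows measure (or $L^1$) data,'' claiming it ``still provides a positive gain.'' This is a genuine gap. Averaging lemmas with $L^1$ or measure-valued right-hand sides (Golse--Saint-Raymond type) give \emph{compactness}, not a quantitative fractional Sobolev gain with universal constants; the quantitative Bouchut-type estimates the paper relies on (\cite[Theorem~1.3]{bouchut}) require $L^p$ sources with $p>1$. Even granting a Besov gain with $L^1$ integrability, you could not interpolate against $g\in L^\infty_tL^2_{x,v}$ to reach $L^{p_0}_{t,x}$ with $p_0>2$ and a constant of the advertised form. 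So the middle step of your argument does not close as written.

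The paper sidesteps the defect measure entirely, with a cleaner device: a \emph{comparison principle}. After truncation one has $(\partial_t + v\cdot\nabla_x)f_1 \le \nabla_v\cdot H_1 + H_0$ with $H_0,H_1\in L^2$ controlled by the energy estimate. Let $g$ solve the free transport \emph{equation} $(\partial_t + v\cdot\nabla_x)g = \nabla_v\cdot H_1 + H_0$ with the same initial data as $f_1$. By the maximum principle for the transport operator, $f_1\le g$, and since $f_1\ge 0$ this gives $\|f_1\|_{L^1_v}\le\|g\|_{L^1_v}$ pointwise in $(t,x)$. One then applies the $L^2$ averaging lemma (the one you already know works, Lemma~\ref{lem:gain-x-sol}) to $g$ --- a genuine solution, no measure in sight --- to obtain $D_x^{1/3}g\in L^2$, and Sobolev embedding gives $\|g\|_{L^2_tL^p_xL^1_v}$ with $p=2d/(d-2/3)>2$. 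The comparison transfers this to $f_1$, and interpolation with the $L^\infty_tL^2$ and $L^2_tL^2_xL^q_v$ bounds of the energy estimate yields \eqref{eq:gain}. I would encourage you to replace the measure-averaging step by this comparison argument; everything else in your outline then goes through, and the constant comes out in the required form because the source terms $H_0,H_1$ are exactly the ones controlled by your energy estimate.
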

This result is a consequence of the comparison principle and the fact
that, for weak signed solutions $f$, we can even get a gain of
regularity. This gain of regularity will be important in the proof of
the decrease of oscillation lemma to get compactness of sequences of
equi-bounded solutions. This is the reason why it is necessary to
state it in cylinders not necessarily centered at the origin.
\begin{theorem}[Gain of regularity for signed solutions]\label{thm:gain-diff}
  Consider $z_0 \in \R^{2d+1}$ and two cylinders $Q_1 =Q_{R_1}(z_0)$
  and $Q_0=Q_{R_0}(z_0)$ with $R_1 < R_0$.  There exists $s>0$
  (universal) such that for all (signed) $L^2$ weak solution $f$ of
  \eqref{eq:main} in $Q_0$, we have
\begin{equation}
  \label{eq:gain-diff}
  \n{f}_{H^s_{x,v,t} (Q_1)} \le C  \n{f}_{L^2(Q_0)}
\end{equation}
where  and $C =C(d,\lambda,\Lambda,Q_0,Q_1)$.
\end{theorem}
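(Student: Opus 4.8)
The plan is to obtain the gain of regularity by combining the natural energy estimate with the averaging lemma mentioned in the introduction. First I would perform the standard energy estimate for the hypoelliptic equation \eqref{eq:main}: multiply the equation by $f\chi^2$ for a suitable cutoff $\chi$ supported in $Q_0$ and equal to $1$ on $Q_1$, integrate in $(x,v,t)$, and use the ellipticity \eqref{eq:ellipticity} together with the fact that the transport term $v\cdot\nabla_x$ is skew-adjoint up to commutator terms with $\chi$. This yields the a priori bound
\[
\|\chi f\|_{L^\infty_t L^2_{x,v}} + \|\nabla_v(\chi f)\|_{L^2_{x,v,t}} \le C\|f\|_{L^2(Q_0)},
\]
where the constant $C$ depends on $d,\lambda,\Lambda$ and on the cutoff, hence on $Q_0,Q_1$. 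Set $g := \chi f$; then $g$ solves an equation of the form $\partial_t g + v\cdot\nabla_x g = \nabla_v\cdot(A\nabla_v g) + S$ on all of $\R^{2d+1}$, where the source $S$ collects the commutator terms and is, by the estimate above, controlled in $L^2_t H^{-1}_v L^2_x + L^2_{x,v,t}$ by $\|f\|_{L^2(Q_0)}$.

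Next I would apply the velocity-averaging/regularity-transfer machinery of \cite{golse,gps}: the right-hand side $\nabla_v\cdot(A\nabla_v g) + S$ lies in a space of the form $L^2_t H^{-1}_{v}L^2_x$ (after absorbing the diffusion term, which is a $v$-divergence of an $L^2$ function), while $g$ itself is in $L^2$ with $\nabla_v g\in L^2$. The averaging lemma then transfers the $H^1_v$ regularity of $g$ into a fractional Sobolev gain $H^s_x$ for $g$ for some universal exponent $s>0$ (depending only on $d$; the precise $s$ is dictated by the interpolation exponent in the averaging lemma and the $H^{-1}_v$ order of the source). Once one has simultaneously $g\in L^2_t H^s_x$, $\nabla_v g\in L^2$, and — from the equation itself — $\partial_t g + v\cdot\nabla_x g\in L^2_t H^{-1}_v L^2_x$, one recovers a gain in the $t$ variable as well: along the transport field the operator $\partial_t + v\cdot\nabla_x$ gains $H^{s'}_t$-type regularity by the same transfer principle (this is exactly the kinetic analogue of the parabolic fact that control of $\partial_t f$ modulo the elliptic part gives time regularity). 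Collecting the three, one gets $g\in H^s_{x,v,t}$ on $\R^{2d+1}$ for a possibly smaller universal $s>0$, and since $g=f$ on $Q_1$ this gives \eqref{eq:gain-diff} with $C=C(d,\lambda,\Lambda,Q_0,Q_1)$.

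The main obstacle is the bookkeeping in the averaging lemma: one must check that the source term produced by the cutoff procedure genuinely has the functional form required to invoke the regularity-transfer statement (in particular that the diffusion term, being only $\nabla_v$ of an $L^2$ function with merely measurable coefficient $A$, is admissible — it is, because averaging lemmas only use the $H^{-1}_v$ structure of the right-hand side and not any smoothness in $x,t$ or in the coefficients), and that the gain is \emph{uniform} over all solutions, i.e. the exponent $s$ does not degenerate. A secondary technical point is extracting the time regularity: one cannot simply quote a one-dimensional averaging lemma, so I would instead treat $t$ as an additional ``space'' variable transported trivially, i.e. work with the augmented transport operator $\partial_t + v\cdot\nabla_x$ acting on $(t,x)\in\R^{d+1}$, and apply the averaging lemma to this enlarged kinetic transport structure. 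Everything else — the energy estimate, the cutoff commutators, and the final restriction to $Q_1$ — is routine.
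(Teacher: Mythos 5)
Your proposal matches the paper's strategy almost exactly: the paper combines the energy estimate (Lemma~\ref{lem:caccio}, giving $\nabla_v f\in L^2$ locally), the hypoelliptic regularity transfer (Lemma~\ref{lem:gain-x-sol}, which is precisely the application of Bouchut's averaging theorem to the truncated unknown $g=\chi f$ and its source terms, giving $D^{1/3}_x f\in L^2$ locally), and then Aubin's lemma together with interpolation to finish. The only substantive difference is your route to time regularity: you propose to fold $t$ into the transport variables and reapply the averaging lemma to the augmented operator $\partial_t + v\cdot\nabla_x$, which is in fact how Bouchut's Theorem~1.3 is set up anyway and gives the $H^s_t$ gain directly; the paper instead invokes Aubin's lemma and interpolation between $L^2_t H^s_{x,v}$ and the $H^{-1}$-type bound on $\partial_t f$ furnished by the equation — both are standard and equivalent here. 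The one point you omit is how to handle cylinders not centered at the origin: since the hypoelliptic scaling is tied to the transport along $t\mapsto (x_0+tv_0,v_0)$, the paper covers $Q_0$ and $Q_1$ by \emph{slanted} cylinders $\tilde Q_R(z_0)=\{|x-x_0-(t-t_0)v_0|<R^3,\,|v-v_0|<R,\,t\in(t_0-R^2,t_0]\}$, on which the preceding estimates apply with universal constants, and then sums the local bounds. This is a routine covering argument, but it is needed to justify that the exponent $s$ and the dependence of $C$ on $(d,\lambda,\Lambda,Q_0,Q_1)$ remain as stated; your cutoff-to-$\R^{2d+1}$ framing would need the same adjustment to keep track of the geometry of a general $z_0$.
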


\subsection{Gain of integrability with respect to $v$ and $t$}
\label{subsec:reverse}

The gain of integrability with respect to $v$ and $t$ is classical. It
derives from the natural energy estimate, after truncation.

We follow here \cite{moser} in order to get the following lemma.
\begin{lemma}[Gain of integrability w.r.t. $v$ and $t$]\label{lem:caccio}
Under the assumptions of Theorem~\ref{thm:gain}, the function $f$ satisfies
\begin{eqnarray}\label{eq:h1v-loc}
 \int_{Q_1}  |\nabla_v f|^2 \le C \int_{Q_0} f^2 \\
\nonumber \n{f}^2_{L^2_t L^2_x L^q_v (Q_1)}  \le C  \int_{Q_0} f^2  \\
\nonumber \n{f}^2_{L_t^{\infty} L^2_x L^2_v (Q_1)} \le C\int_{Q_0} f^2
\end{eqnarray}
for some $q>2$ and  $C = \bar C
\left(\frac1{R_0^2-R_1^2}+\frac{R_0}{R_0^3-R_1^3}+\frac1{(R_0-R_1)^2}\right)$
and $\bar C = \bar C (d,\lambda,\Lambda)$.
\end{lemma}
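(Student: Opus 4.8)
The plan is to run the classical Caccioppoli / energy argument adapted to the hypoelliptic scaling, exploiting the crucial fact that the first-order transport part $v\cdot\nabla_x$ does not contribute to the energy balance (it is skew-adjoint against the natural pairing). First I would fix a cutoff $\chi = \chi(x,v,t)$ with $\chi \equiv 1$ on $Q_1$, $\supp\chi \subset Q_0$, and with $|\nabla_v\chi|, |\partial_t\chi|, |v\cdot\nabla_x\chi|$ controlled by the stated combination $\tfrac{1}{R_0^2-R_1^2} + \tfrac{R_0}{R_0^3-R_1^3} + \tfrac{1}{(R_0-R_1)^2}$; the anisotropic rescaling $x\sim R^3$, $v\sim R$, $t\sim R^2$ is exactly what makes these three terms the relevant ones. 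Then I would test the sub-solution inequality for \eqref{eq:main} against $\chi^2 f$ (truncating first at a level $k$ or using that $f\ge 0$ so that $f$ itself is an admissible test function modulo the usual Steklov-averaging / mollification in $t$ to justify the time derivative). The transport term $\int (v\cdot\nabla_x f)\chi^2 f = \tfrac12\int v\cdot\nabla_x(f^2)\chi^2 = -\tfrac12 \int f^2\, v\cdot\nabla_x(\chi^2)$ produces only a lower-order term absorbed into the right-hand side.

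Carrying out the integration by parts in $v$ on the diffusion term gives $-\int \nabla_v\cdot(A\nabla_v f)\,\chi^2 f = \int A\nabla_v f\cdot\nabla_v f\,\chi^2 + 2\int \chi\, A\nabla_v f\cdot\nabla_v\chi\, f$. Using ellipticity \eqref{eq:ellipticity}, the first term dominates $\lambda\int \chi^2|\nabla_v f|^2$, while the cross term is handled by Cauchy--Schwarz with a small parameter: $2\Lambda\int \chi |\nabla_v f||\nabla_v\chi| f \le \tfrac{\lambda}{2}\int\chi^2|\nabla_v f|^2 + C\int|\nabla_v\chi|^2 f^2$. Combining with the time-derivative term $\tfrac12\partial_t\int\chi^2 f^2 - \tfrac12\int(\partial_t\chi^2)f^2$ and integrating in $t$ up to any slice, I obtain simultaneously $\sup_t\int\chi^2 f^2\,\mathrm dx\,\mathrm dv + \int\chi^2|\nabla_v f|^2 \le C\int_{Q_0} f^2$, which already yields the first and third inequalities of \eqref{eq:h1v-loc}.

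For the middle inequality — the gain of integrability in $v$ from $L^2$ to $L^q$ with $q>2$ — I would interpolate: on each fixed slice $(x,t)$, the function $w(v) := \chi(x,v,t) f(x,v,t)$ lies in $H^1_v(\R^d)$ with norm controlled by the quantities just bounded, so by the Gagliardo--Nirenberg--Sobolev inequality on $\R^d$ one gets $\|w\|_{L^{q}_v} \lesssim \|w\|_{H^1_v}$ for $q = \tfrac{2d}{d-2}$ when $d\ge 3$ (and any $q<\infty$ when $d\le 2$); integrating the resulting bound in $x$ and $t$ and using the $L^\infty_t L^2_x L^2_v$ control to dominate the lower-order mass term gives $\|f\|_{L^2_t L^2_x L^q_v(Q_1)}^2 \le C\int_{Q_0} f^2$. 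The constant in each step is linear in the cutoff-derivative bounds, which assemble into the claimed $C$.

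The main obstacle — and the only genuinely delicate point — is the rigorous justification of testing the equation against $\chi^2 f$ (or a truncation thereof) when $f$ is merely an $L^2$ weak \emph{sub}-solution with only $\nabla_v f \in L^2_{\mathrm{loc}}$ a priori and no control on $\partial_t f$ or $v\cdot\nabla_x f$ separately: the combined distribution $\partial_t f + v\cdot\nabla_x f$ lives in a negative-order anisotropic Sobolev space, so pairing it with $\chi^2 f$ requires a careful mollification in the $(x,t)$ "transport" directions along the characteristics $\dot x = v$, showing the commutator between the mollifier and the transport operator vanishes in the limit, and passing to the limit. This is the standard but technical Steklov-type regularization for kinetic equations; I would state it as a lemma (or cite the corresponding step in \cite{pp} or \cite{wz09,wz11}) and otherwise proceed formally, since once admissibility of the test function is granted the rest is the routine computation sketched above.
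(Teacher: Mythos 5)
Your proposal is correct and follows essentially the same route as the paper: test against $2\chi^2 f$ (the paper writes $2f\Psi^2$), integrate by parts in $t$, $x$, and $v$, absorb the cross term by Cauchy--Schwarz and ellipticity, read off the $H^1_v$ and $L^\infty_t L^2_{x,v}$ bounds from the resulting differential inequality, and obtain the $L^q_v$ gain by Sobolev embedding on $v$-slices. The only addition relative to the paper is your explicit discussion of the admissibility of the test function (Steklov/mollification along characteristics), which the paper leaves implicit; that is a reasonable thing to flag, though it does not change the substance of the argument.
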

\begin{proof}
Consider $\Psi \in C^\infty_c(\R^{2d} \times \R)$ and integrate the
inequation satisfied by $f$ against $2 f \Psi^2$ in $\R^{2d} \times  [t_1,0] =\cR$
with $t_1 \in (-R_1^2,0]$  and get 
\[ 
\int_{\cR} \partial_t (f^2) \Psi^2 +  \int_{\cR} v \cdot \nabla_x (f^2) \Psi^2
\le 2 \int_{\cR} \nabla_v (A \nabla_v f ) f \Psi^2  .
\]
Add $\int_{\cR} f^2 \partial_t (\Psi^2)$, integrate by parts several times
and use the upper bound on $A$ in order to get 
\begin{multline*}
 \int_{\cR} \partial_t (f^2 \Psi^2) +  2 \int_{\cR} (A \nabla_v f \cdot \nabla_v f)
\Psi^2 \\
 \le  \int_{\cR} f^2 (\partial_t 
+ v \cdot \nabla_x )(\Psi^2) + 2 \int_{\cR}  \Psi \sqrt{A} \nabla_v f \cdot  f \sqrt{A} \nabla_v \Psi \\
 \le  \int_{\cR} f^2 (\partial_t + v \cdot \nabla_x )(\Psi^2) 
+  \int_{\cR}  (A \nabla_v f \cdot \nabla_v f) \Psi^2 +  \int_{\cR} f^2 (A \nabla_v \Psi \cdot \nabla_v \Psi). 
\end{multline*}
We thus get 
\begin{multline*}
  \int_{\cR} \partial_t (f^2 \Psi^2) + \lambda \int_{\cR} |\nabla_v f|^2
  \Psi^2  \\ 
\le \bar{C} \left( \n{\partial_t \Psi}_{\infty} + R_0 \n{\nabla_x \Psi}_{\infty} +
    \n{\nabla_v \Psi}_{\infty}^2 \right)   \int_{\cR \cap \supp \Psi} f^2 
\end{multline*}
with $\bar{C} = C(\Lambda,d)$.  Choose next $\Psi^2$ such
that $\Psi (t=0)=0$ and $\supp \Psi \subset Q_0$ and get
\[ \int_{x,v} f^2  \Psi^2 (t_1) + \lambda \int |\nabla_v f|^2 \Psi^2
\le  C C_{0,1}  \int_{Q_0} f^2. \]

If $\Psi^2$ additionally satisfies $\Psi^2 \equiv 1$ in $Q_1$, we get
\eqref{eq:h1v-loc}. The Sobolev inequality  then implies the estimate
for $\n{f}^2_{L^2_t L^2_x L^q_v (Q_1)}$. If now $t_1 \in [t_0-r_1^2,t_0]$ is arbitrary, 
we get the  estimate for
$\n{f}^2_{L_t^{\infty} L^2_x L^2_v (Q_1)}$. The proof is now complete. 
\end{proof}

\subsection{Gain of regularity with respect to $x$ for signed weak solutions}

\begin{lemma}[Gain of regularity w.r.t. $x$]\label{lem:gain-x-sol}
  Under the assumptions of Theorem~\ref{thm:gain}, if $f$ is a signed
  weak solution to \eqref{eq:main}, 
\begin{equation}
  \label{eq:gain-x-sol}
  \n{D^{1/3}_x f}_{L^2(Q_1)} \le  C  \n{f}_{L^2
    (Q_0)}
\end{equation}
with $C = \bar C \left(\frac1{R_0^2 -R_1^2}+\frac{R_0}{R_0^3-R_1^3}+ \frac1{(R_0-R_1)^2} 
\right)$ and $\bar C= \bar C (d,\lambda,\Lambda)$. 
In the case $q>1$ with $\cQ_1$ instead of $Q_1$, we have
\begin{equation}
  \label{eq:gain-x-sol-q}
  \n{D^{1/3}_x f}_{L^q(\cQ_1)} \le  C  \n{\nabla_v f}_{L^q
    (Q_0)}
\end{equation}
with $C = C (d,\lambda,\Lambda,Q_0,\cQ_1)$. 
\end{lemma}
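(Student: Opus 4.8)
The plan is to apply a velocity-averaging / regularity-transfer argument to the equation $\partial_t f + v\cdot\nabla_x f = \nabla_v\cdot(A\nabla_v f)$, localised by a cutoff. First I would fix $\Psi\in C^\infty_c(Q_0)$ with $\Psi\equiv 1$ on $Q_1$, and write $g:=f\Psi$; then $g$ solves a transport equation
\[
\partial_t g + v\cdot\nabla_x g = \nabla_v\cdot(A\nabla_v g) + S,
\]
where the source $S$ collects all the commutator terms coming from differentiating $\Psi$: terms of the form $f(\partial_t+v\cdot\nabla_x)\Psi$, $\nabla_v\cdot(A f\,\nabla_v\Psi)$ and $A\nabla_v f\cdot\nabla_v\Psi$. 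The key observation is that, by Lemma~\ref{lem:caccio}, both $f$ and $\nabla_v f$ are controlled in $L^2(Q_0)$ by $\|f\|_{L^2(Q_0)}$, so the right-hand side can be written as a sum $S = S_0 + \nabla_v\cdot S_1$ with $S_0, S_1\in L^2$, with norms bounded by $C\|f\|_{L^2(Q_0)}$ and $C$ of the stated form (the factors $\frac{1}{R_0^2-R_1^2}$, $\frac{R_0}{R_0^3-R_1^3}$, $\frac{1}{(R_0-R_1)^2}$ are exactly the sizes of $\partial_t\Psi$, $v\cdot\nabla_x\Psi$ — note $|v|\lesssim R_0$ on $Q_0$ — and $|\nabla_v\Psi|^2$ respectively). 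Similarly the main term $\nabla_v\cdot(A\nabla_v g)$ is of the form $\nabla_v\cdot(\text{$L^2$ field})$.

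Next I would invoke the averaging lemma in the form used in kinetic theory (see \cite{golse,gps}): for a solution of $\partial_t g + v\cdot\nabla_x g = \nabla_v^\beta h$ (finitely many $v$-derivatives of $L^2$ functions $h$) with $g\in L^2$ compactly supported, the velocity average — and in fact, via a regularity-transfer argument, $g$ itself after a fractional derivative in $x$ — gains $H^s_x$ regularity. Concretely, applying this with the right-hand side being a divergence $\nabla_v\cdot(\cdot)$ of an $L^2$ field gives a gain of one $v$-derivative's worth split between $x$ and $t$; optimising the Fourier multiplier estimate one finds that $D_x^{1/3}g \in L^2$, which is the exponent $1/3$ reflecting the parabolic-type scaling $|\tau + v\cdot\xi|\sim |\xi|^{1/3}$-type balance. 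Since $g = f$ on $Q_1$, restricting gives $\|D_x^{1/3}f\|_{L^2(Q_1)}\le C\|f\|_{L^2(Q_0)}$ with $C$ of the asserted form, proving \eqref{eq:gain-x-sol}.

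For the second estimate \eqref{eq:gain-x-sol-q} (the $L^q$ version on $\cQ_1$), the argument is the same but uses the $L^q$ version of the averaging/regularity-transfer lemma instead of the $L^2$ one, which is available for $q>1$ by Calderón–Zygmund theory applied to the relevant Fourier multipliers; here one only needs to control $\nabla_v f$ (and lower-order terms, absorbed into it via the cutoff estimates) in $L^q(Q_0)$, giving the stated right-hand side $\|\nabla_v f\|_{L^q(Q_0)}$. The main obstacle — and the point requiring care rather than a new idea — is the bookkeeping of the cutoff commutators: one must verify that every term generated by localisation is genuinely of the form ``$L^2$ function'' or ``$\nabla_v$ of an $L^2$ function'' (so that the averaging lemma applies), and that its norm carries exactly the claimed dependence on $R_0, R_1$; the term $A\nabla_v f\cdot\nabla_v\Psi$, which is only $L^2$ and not a divergence, and the term $\nabla_v\cdot(Af\nabla_v\Psi)$, which is a divergence, must be grouped correctly. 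The fractional regularity-transfer step (from ``average gains $H^s$'' to ``$D_x^{1/3}f\in L^2$'') is standard in the averaging-lemma literature and I would simply cite it.
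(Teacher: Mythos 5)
Your plan for \eqref{eq:gain-x-sol} is essentially the paper's proof. You localize $f$ by a cutoff, arrange the source of the resulting free-transport equation into a sum of an $L^2$ term and a $v$-divergence of an $L^2$ field, bound both by $\|f\|_{L^2(Q_0)}$ via Lemma~\ref{lem:caccio}, and then invoke a velocity-averaging / regularity-transfer result to conclude $D_x^{1/3}f\in L^2$ locally. The paper does exactly this, with a two-stage cutoff $\chi_1,\chi_{1/2}$ to keep everything supported in $Q_0$, and cites Bouchut's Theorem~1.3 with $p=2$, $r=0$, $\beta=1$, $m=1$, $\kappa=1$, $\Omega=0$; your account of the constant (time-derivative, $v\cdot\nabla_x$-derivative, and squared $v$-gradient of the cutoff producing $\frac1{R_0^2-R_1^2}$, $\frac{R_0}{R_0^3-R_1^3}$, $\frac1{(R_0-R_1)^2}$ respectively) also matches.

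There is, however, a real gap in your treatment of the second estimate \eqref{eq:gain-x-sol-q}. The localized equation has a commutator term $\alpha_1 f$ with $\alpha_1 = (\partial_t + v\cdot\nabla_x)\Psi$, and this term involves $f$ itself, not $\nabla_v f$. You assert that the ``lower-order terms'' are ``absorbed into $\nabla_v f$ via the cutoff estimates,'' but that cannot work: a zeroth-order quantity in $f$ is not controlled by $\|\nabla_v f\|_{L^q}$ (think of a function constant in $v$). The paper's point, stated tersely, is precisely to \emph{kill} this term rather than absorb it: one chooses the cutoff so that $\alpha_1\equiv 0$, i.e.\ a cutoff adapted to the free-transport characteristics. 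Concretely this forces the cutoff to have the sheared form $\phi_{R^3}(x-x_0-(t-t_0)(v-v_0))\phi_R(v-v_0)$, which is why the estimate \eqref{eq:gain-x-sol-q} is naturally stated on the sheared cylinder $\cQ_1$ instead of on $Q_1$ --- a feature you did not explain. With $\alpha_1\equiv 0$, the remaining source terms are $\chi_1 A\nabla_v f_{1/2}$ (a divergence) and $-\nabla_v\chi_1\cdot A\nabla_v f_{1/2}$, both of which are indeed controlled by $\|\nabla_v f\|_{L^q(Q_0)}$, and one applies Bouchut's theorem with $p=q$. This is the one missing idea; the rest of your plan is sound.
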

\begin{proof}
  Let $R_{\frac12}=\frac{R_1+R_0}2$ and $Q_{\frac12} =Q_{R_{\frac12}}$. In particular, 
\[
Q_1 \subset Q_{\frac12}  \subset Q_0.
\]  
For $i=1,\frac{1}2$, consider $f_i = f \chi_i$ where $\chi_1$ and
$\chi_{\frac12}$ are two truncation functions such that
\begin{align*}
 \chi_1 \equiv 1 \text{ in } Q_1  & \quad \text{ and } \quad 
\chi_1 \equiv 0 \text{ outside } Q_{\frac12} \\
 \chi_{\frac12} \equiv 1 \text{ in } Q_{\frac12} & \quad \text{ and } \quad 
\chi_{\frac12} \equiv 0 \text{ outside } Q_0.
\end{align*}  
We get
\[   (\partial_t  + v \cdot \nabla_x) f_1 = \nabla_v \cdot H_1 + H_0
\text{ in } \R^{2d} \times (-\infty;0] \]
\[
\text{with } \quad \begin{cases}
H_1 &= \chi_1 A \nabla_v f_{\frac12} \\
H_0 &= -\nabla_v \chi_1 \cdot A \nabla_v f_{\frac12} + \alpha_1 f_\frac12 \\
\alpha_1 &=  (\partial_t + v \cdot \nabla_x ) \chi_1.
\end{cases}
\]
The previous equation holds true in $\R^{2d} \times (-\infty;0]$ since
$f_1$, $H_0$ and  $H_1$  are supported in $Q_0$. 
We remark that using \eqref{eq:h1v-loc},
\[ 
\|H_0\|_{L^2}+ \|H_1 \|_{L^2} \le C \|f\|_{L^2(Q_0)}
\] 
with $C$ as in the statement.  Applying \cite[Theorem~1.3]{bouchut}
with $p=2$, $r=0$, $\beta =1$, $m=1$, $\kappa=1$ and $\Omega =0$
yields \eqref{eq:gain-x-sol}.  To get \eqref{eq:gain-x-sol-q}, we
simply use a cut-off function such that $\alpha_1 \equiv 0$ and we
apply \cite[Theorem~1.3]{bouchut} with $p=q$, $r=0$, $\beta =1$,
$m=1$, $\kappa=1$ and $\Omega =0$. The proof is now complete.
\end{proof}

\subsection{Gain of integrability with respect to $x$ for non-negative sub-solutions}

\begin{lemma}[Gain of integrability w.r.t. $x$]\label{lem:gain-x}
Under the assumptions of Theorem~\ref{thm:gain}, there exists $p>2$
such that 
\begin{equation}
  \label{eq:gain-x}
  \n{f}_{L^2_t L^p_x L^1_v (Q_1)} \le C   \n{f}_{L^2
    (Q_0)}
\end{equation}
with $C  = \bar C\left(\frac1{R_1-R_0}+ \frac1{(r_1-r_0)^2} + \frac1{\tau_1-\tau_0}
\right)$ and $\bar C=\bar C(d,\lambda,\Lambda)$.
\end{lemma}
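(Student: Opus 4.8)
The plan is to reduce the statement to the signed-solution estimate of Lemma~\ref{lem:gain-x-sol} (or directly to the averaging lemma \cite[Theorem~1.3]{bouchut}) applied not to $f$ itself but to a convex nonlinear function of $f$, which is again a non-negative sub-solution. The obstruction is that a genuine $H^s_x$ gain, hence the clean application of the averaging lemma with a distributional $\nabla_v\cdot H_1$ source, is available only for \emph{solutions}, not sub-solutions; so one cannot simply quote Lemma~\ref{lem:gain-x-sol}. Instead one works with the kinetic formulation: a non-negative sub-solution $f$ of \eqref{eq:main} produces, after truncation by a cut-off $\chi$ supported in $Q_0$ with $\chi\equiv 1$ on $Q_1$, an inequality of the form
\[
 (\partial_t + v\cdot\nabla_x)(f\chi) \le \nabla_v\cdot H_1 + H_0
\]
with $H_1 = \chi A\nabla_v f$ and $H_0$ collecting the commutator terms $-\nabla_v\chi\cdot A\nabla_v f + \big((\partial_t+v\cdot\nabla_x)\chi\big)f$, exactly as in the proof of Lemma~\ref{lem:gain-x-sol}. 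By Lemma~\ref{lem:caccio} (estimate \eqref{eq:h1v-loc}) we already control $\|H_0\|_{L^2}+\|H_1\|_{L^2}$ by $C\|f\|_{L^2(Q_0)}$ with the stated dependence on the radii.

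Next I would pass from the differential \emph{inequality} to a transport \emph{equation with a non-negative defect measure}: write $(\partial_t+v\cdot\nabla_x)(f\chi) = \nabla_v\cdot H_1 + H_0 - m$ with $m\ge 0$ a non-negative (signed-measure) remainder. Since $f\chi$ is compactly supported in $(x,v,t)$, this is an equation on the whole space $\R^{2d}\times(-\infty,0]$. The averaging/regularity-transfer lemma of Bouchut (the version used in Lemma~\ref{lem:gain-x-sol}, i.e.\ \cite[Theorem~1.3]{bouchut}) then transfers $L^2_{t,x,v}$-integrability of $H_0$, $H_1$ and the defect $m$ into a gain of integrability in the $x$-variable for the $v$-average of $f\chi$ — more precisely into an $L^2_t L^p_x L^1_v$ bound for some $p>2$ depending only on $d$ (the exponent $1$ in $v$ reflecting that we control the $L^1_v$ norm / the velocity-average, not pointwise-in-$v$ values). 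Controlling $m$ requires only that $m$ has finite total mass bounded by $\|H_0\|_{L^1}+\|H_1\|_{H^{-1}_v}$-type quantities, which again follows from \eqref{eq:h1v-loc} after a Cauchy–Schwarz on the fixed compact support. Collecting constants, with $r_0,r_1$ the $v$-radii and $\tau_0,\tau_1$ the time radii of $Q_1\subset Q_0$, yields the claimed
\[
 C = \bar C\Big(\tfrac{1}{R_1-R_0}+\tfrac{1}{(r_1-r_0)^2}+\tfrac{1}{\tau_1-\tau_0}\Big),
 \qquad \bar C = \bar C(d,\lambda,\Lambda).
\]

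The main obstacle, and the place requiring genuine care rather than bookkeeping, is the treatment of the defect measure $m$ in the averaging lemma: one must check that \cite[Theorem~1.3]{bouchut} (or an elementary interpolation of the classical $L^2$ averaging lemma with the energy estimate) still applies when the right-hand side contains, in addition to an $L^2$ term and a $v$-divergence of an $L^2$ term, a non-negative measure of finite mass; the gain of integrability in $x$ survives because the defect only helps (it has the good sign), but making this rigorous is the crux. Everything else — the choice of the cut-off $\chi$ producing the stated radius-dependence, the bounds on $H_0,H_1$ from Lemma~\ref{lem:caccio}, and assembling the final constant — is routine and parallels the proof of Lemma~\ref{lem:gain-x-sol}.
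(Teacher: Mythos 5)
Your plan and the paper's diverge exactly at the step you flag as the crux: how to exploit the sign of the defect coming from the differential inequality. You propose to write $(\partial_t+v\cdot\nabla_x)(f\chi)=\nabla_v\cdot H_1+H_0-m$ with $m\ge 0$ a measure of finite mass and to run the averaging lemma on this full identity, arguing that the defect ``only helps because it has the good sign.'' That claim is the gap. The averaging lemma you cite, \cite[Theorem~1.3]{bouchut}, transfers $L^p$ integrability (or a $\nabla_v$-divergence of an $L^p$ field) of the source into $H^{s}_x$ regularity of the velocity average; a bounded non-negative measure has neither structure, and the sign of the source does not, by itself, produce a one-sided regularity gain for $\int f\,dv$ in $x$. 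You would need to prove a measure-forced averaging lemma from scratch, and it is not clear that it holds in the form you sketch.

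The paper exploits the sign differently and much more cheaply. Keeping your $H_0$, $H_1$ and cut-off, one has $(\partial_t+v\cdot\nabla_x)f_1\le \nabla_v\cdot H_1+H_0$; now let $g$ solve the corresponding \emph{equality} with matching initial data at $t=-R_1^2$. The comparison principle gives $0\le f_1\le g$ pointwise. The averaging lemma is then applied only to the genuine solution $g$ (no defect appears), exactly as in Lemma~\ref{lem:gain-x-sol}, yielding $D^{1/3}_x g\in L^2$ and hence, by Sobolev embedding, $g\in L^2_t L^p_x L^1_v$ with $p=2d/(d-2/3)>2$. Finally --- and this is where the choice of $L^1_v$ as the \emph{innermost} norm is essential --- the map $h\mapsto \|h\|_{L^2_t L^p_x L^1_v}$ is monotone on non-negative functions, so $\|f_1\|_{L^2_t L^p_x L^1_v}\le\|g\|_{L^2_t L^p_x L^1_v}$, and the claimed bound follows. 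In short: rather than pushing a defect measure through the averaging lemma, dominate $f_1$ by a solution and use monotonicity of the target norm. You had the right intuition about the sign; the missing idea is to cash it in via comparison \emph{before}, not \emph{during}, the application of the averaging lemma.
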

\begin{proof}
We follow the reasoning of Lemma~\ref{lem:gain-x-sol}. The function $f_1$ 
now satisfies the following inequation
\[   (\partial_t  + v \cdot \nabla_x) f_1 \le \nabla_v \cdot H_1 + H_0
\text{ in } \R^{2d} \times \R. \]
If $g$ solves 
\[
\begin{cases}   (\partial_t  + v \cdot \nabla_x) g = \nabla_v \cdot H_1 + H_0\\
  g(x,v,-R_1^2) = f_1 (x,v,-R_1^2)
\end{cases} 
\] 
then the comparison principle implies that $f_1 \le g$ in $\R^{2d}
\times [-R_1^2,0]$. Applying Lemma~\ref{lem:gain-x-sol} and
Sobolev inequality,  we get
\[
\|f_1 \|_{L^2_t L^p_x L^1_v}  \le \|g \|_{L^2_t L^p_x L^1_v}  \le C C_{0,1}  \|f_0\|_2 
\]
(where $p = 2d/(d-2/3) >2$) which yields the desired estimate.
\end{proof}

\subsection{Proof of Theorems~\ref{thm:gain} and \ref{thm:gain-diff}}

\begin{proof}[Proof of Theorem~\ref{thm:gain}]
Combine Lemmas~\ref{lem:caccio} and \ref{lem:gain-x}, use
interpolation to get the result through a covering argument. 
\end{proof}

\begin{proof}[Proof of Theorem~\ref{thm:gain-diff}]
  We first prove the result when cylinders are centered at the
  origin. In this case, it is enough to combine
  Lemmas~\ref{lem:caccio} and \ref{lem:gain-x-sol}, Aubin's lemma and
  use interpolation to get the result.

For cylinders that are not centered at the origin, we use \emph{slanted} cylinders 
of the form: 
\[ \tilde{Q}_R (z_0) = \{ (x,v,t): |x-x_0 - (t-t_0)v_0| < R^3,
|v-v_0|< R, t \in (t_0-R^2,t_0]\}.\] Now we cover $Q_0$ and $Q_1$ with
such slanted cylinders, we get the gain of regularity (whose exponent
remains universal) and we get the desired result. 
\end{proof}

\subsection{Local upper bounds for non-negative sub-solutions}
\label{sec:supr-infim-bounds}

In this subsection, we iterate the local gain of integrability to
prove that non-negative $L^2$ sub-solutions are in fact locally
bounded (with an estimate). 
\begin{theorem}[Upper bounds for non-negative $L^2$
  sub-solutions] \label{thm:sup-sub} Given two cylinders
  $Q_0=Q_{R_0}(z_0)$ and $Q_\infty=Q_{R_\infty} (z_0)$, let $f$ be a
  non-negative $L^2$ sub-solution of
\[ (\partial_t + v\nabla_x) f \le \nabla_v (A \nabla_v f) \quad \text{ in } Q_0.\]
Then
\[ \sup_{Q_\infty} f \le C \| f\|_{L^2(Q_0)}\]
for some $C=C(d,\lambda,\Lambda,Q_0,Q_{\infty})$. 
\end{theorem}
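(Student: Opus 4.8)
The plan is to run a Moser iteration built on the local gain of integrability already established in Theorem~\ref{thm:gain}. The point of Theorem~\ref{thm:gain} is that for any pair of nested cylinders $Q_{R_1}(z_0) \subset Q_{R_0}(z_0)$ one has a reverse-H\"older type inequality $\|f\|_{L^q(Q_{R_1})} \le C \|f\|_{L^2(Q_{R_0})}$ with $q>2$ universal and $C$ depending explicitly (and polynomially in $1/(R_0-R_1)$-type quantities) on the geometry. The key structural observation is that this estimate is stable under the operation $f \mapsto f^{p}$ for $p\ge 1$: if $f$ is a non-negative $L^2$ sub-solution of the linear equation, then $f^p$ is again a non-negative sub-solution of a divergence-form hypoelliptic inequation of the same type (with the same $\lambda,\Lambda$), because the function $\phi(s)=s^p$ is convex and non-decreasing. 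This is precisely the reason the authors insisted on proving the gain of integrability for sub-solutions rather than just solutions. Concretely, one checks $(\partial_t + v\cdot\nabla_x)\phi(f) = \phi'(f)(\partial_t+v\cdot\nabla_x)f \le \phi'(f)\nabla_v\cdot(A\nabla_v f) = \nabla_v\cdot(A\nabla_v \phi(f)) - \phi''(f)(A\nabla_v f\cdot\nabla_v f) \le \nabla_v\cdot(A\nabla_v\phi(f))$ since $\phi''\ge0$ and $A\ge0$; this computation is formal but can be justified by truncating $\phi$ near the origin (replacing $s^p$ by something Lipschitz there) so that $\phi'(f)\in L^\infty$ and $\phi(f)\in L^2$.

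Granting that, here is the iteration. Set $\chi = q/2 > 1$ (universal). Pick a decreasing sequence of radii $R_k \downarrow R_\infty$ interpolating between $R_0$ and $R_\infty$, say $R_k = R_\infty + (R_0-R_\infty)2^{-k}$, and let $Q_k = Q_{R_k}(z_0)$, so $Q_0 \supset Q_1 \supset \cdots \supset Q_\infty = \bigcap_k Q_k$. Apply Theorem~\ref{thm:gain} to the sub-solution $f^{\chi^k}$ on the pair $Q_{k+1}\subset Q_k$: this gives
\[
\|f\|_{L^{2\chi^{k+1}}(Q_{k+1})} = \|f^{\chi^k}\|_{L^{2\chi}(Q_{k+1})}^{1/\chi^k} \le \left(C_k \, \|f^{\chi^k}\|_{L^2(Q_k)}\right)^{1/\chi^k} = C_k^{1/\chi^k}\,\|f\|_{L^{2\chi^k}(Q_k)},
\]
where $C_k = \bar C\big(\tfrac{1}{R_k^2-R_{k+1}^2}+\tfrac{R_k}{R_k^3-R_{k+1}^3}+\tfrac{1}{(R_k-R_{k+1})^2}\big)$. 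Since $R_k-R_{k+1} \sim (R_0-R_\infty)2^{-k}$ and the $R_k$ stay in the compact range $[R_\infty,R_0]$, one gets $C_k \le C_* \, 4^k$ for a constant $C_*$ depending only on $d,\lambda,\Lambda,Q_0,Q_\infty$. Iterating from $k=0$ to $\infty$ yields
\[
\sup_{Q_\infty} f = \lim_{k\to\infty}\|f\|_{L^{2\chi^k}(Q_k)} \le \Big(\prod_{k\ge0} C_k^{1/\chi^k}\Big)\,\|f\|_{L^2(Q_0)},
\]
and the product converges because $\sum_k \chi^{-k}\log C_k \le \sum_k \chi^{-k}(\log C_* + k\log 4) < \infty$ as $\chi>1$. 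This gives the claimed bound with $C = \prod_k C_k^{1/\chi^k} < \infty$ depending on $d,\lambda,\Lambda$ and the two cylinders.

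Two points need care. First, the identification $\lim_k \|f\|_{L^{p_k}(Q_k)} = \sup_{Q_\infty} f$ with $p_k\to\infty$ and shrinking domains: one runs the chain to show $\|f\|_{L^{p_k}(Q_{k})}$ is uniformly bounded, hence $\|f\|_{L^{p}(Q_\infty)}$ is bounded uniformly in $p$ (since $Q_\infty \subset Q_k$ for all $k$), and then $\|f\|_{L^\infty(Q_\infty)} = \lim_{p\to\infty}\|f\|_{L^p(Q_\infty)}$ by the standard fact about $L^p$ norms on a finite-measure set. Second, the justification that $f^p$ is a genuine weak sub-solution with the required $L^2$ regularity to even invoke Theorem~\ref{thm:gain}: here one uses the truncated convex function $\phi_M(s) = s^p$ for $s\le M$, linear for $s > M$, apply the estimate to $\phi_M(f)$ with constants independent of $M$, and let $M\to\infty$ by monotone convergence. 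The main obstacle, and the only genuinely delicate step, is precisely this rigorous bookkeeping: verifying that the chain rule / convexity manipulation producing the sub-solution inequation for $\phi(f)$ is valid at the level of weak solutions with merely measurable $A$, and that the constants in Theorem~\ref{thm:gain} applied to $\phi(f)$ do not secretly depend on $\phi$ — they do not, since $\phi$ only enters through the sign of $\phi''$, but this must be stated carefully. Everything else is the routine summation of a convergent geometric-type series.
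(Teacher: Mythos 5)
Your proof is correct and follows essentially the same Moser-iteration route as the paper: apply Theorem~\ref{thm:gain} to the sub-solutions $f^{\chi^k}$ (sub-solutions by convexity of $s\mapsto s^p$, justified by truncation) on a nested chain of cylinders, and observe that the $\chi^{-k}$-damped product of constants converges. The only differences are cosmetic --- your geometric radius gaps give $C_k\sim 4^k$ while the paper's choice $R_{n+1}=R_n-\tfrac{1}{a(n+1)^2}$ gives polynomial $C_n$, both summable --- plus one small omission you should flag: Theorem~\ref{thm:gain} is stated for origin-centered cylinders, so for general $z_0$ one must first invoke the slanted-cylinder covering that the paper imports from the proof of Theorem~\ref{thm:gain-diff}.
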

\begin{proof}
We first prove the result for cylinders centered at the origin. 
  To do so, we first remark that, for all $q>1$, the function $f^q$
  satisfies
\[ (\partial_t + v\nabla_x) f^q \le \nabla_v \cdot (A \nabla_v f^q) \quad
\text{ in } Q_0 .\]

We now rewrite \eqref{eq:gain} from $Q_q =Q_{R_q} (z_0)$ to $Q_{q+1}$ with $R_{q+1} < R_q$ 
as follows: 
\begin{equation}\label{eq:gain-again}
  \n{(f^q)^\kappa}_{L^2 (Q_{q+1})}^2 \le C_{q+1}  \n{f^q}_{L^2(Q_q)}^{2\kappa} 
\end{equation}
where $\kappa = p /2 > 1$ and 
\[ C_{q+1} = \bar C \left[\frac1{R_q^2-R_{q+1}^2}+\frac{R_q}{R_q^3-R_{q+1}^3}+\frac1{(R_q-R_{q+1})^2}\right]^\kappa \]
with $\bar C = \bar C (d,\lambda,\Lambda)$. 

Choose now $q = q_n = 2 \kappa^n$ for $n \in \N$, simply write
$Q_n$ for $Q_{q_n}$ and $C_n$ for $C_{q_n}$ and get from
\eqref{eq:gain-again} 
\begin{equation}\label{eq:moser}
 \n{f^{q_{n+1}}}^2_{L^2(Q_{n+1})} \le  C_{n+1} \n{f^{q_n}}^{2\kappa}_{L^2(Q_n)}.  
\end{equation}
Moreover, we choose 
\[R_{n+1} = R_n - \frac{1}{a(n+1)^2}\]
for some $a>0$ so that 
\[ C_n \sim \bar C (a^2 n^4 + b n^2)^\kappa \]
with $b= \frac{5a}{6R_\infty}$. 
Applying iteratively \eqref{eq:moser}, we get the result if 
\[ \prod_{n=0}^{+\infty} C_n^{\frac1{2\kappa^n}} < +\infty\] which
indeed holds true. This yields the desired result in the case of
cylinders centered at the origin.

For cylinders that are not centered at the origin, we argue as in the
proof of Theorem~\ref{thm:gain-diff}. The proof is now complete.
\end{proof}

\section{Gain of integrability for the gradient  w.r.t. the velocity variable}
\label{sec:l2+eps}

This subsection is devoted to the proof of the following theorem. 
\begin{theorem}[Gain of integrability for $\nabla_v f$]\label{thm:l2+eps}
  Let $f$ be a solution of \eqref{eq:main} in some cylinder $Q_0
  =Q_{R_0} (z_0)$. There exists a universal $\eps >0$ 
  such that for all $Q_i = Q_{R_i}(z_0)$, $i=1,2$ with $R_2<R_1 < R_0$, 
 $\nabla_v f \in L^{2+\eps}(Q_2)$
\begin{equation}\label{eq:l2+eps}
\int_{Q_2} |\nabla_v f|^{2+\eps} dz \le C \left( \int_{Q_1} |\nabla_v f |^2 dz \right)^{\frac{2+\eps}2}
\end{equation}
with $C = C(d,\lambda,\Lambda,Q_2,Q_1,Q_0)$. 
\end{theorem}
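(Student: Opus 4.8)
The plan is to reduce the theorem to a reverse Hölder inequality with increasing supports for $|\nabla_v f|$ on small hypoelliptic cylinders, and then to conclude by Gehring's self‑improving lemma. I would exploit the Galilean invariance of the operator throughout: replacing the cylinders by slanted ones $\tilde Q_R(z_0)$ as in the proof of Theorem~\ref{thm:gain-diff} (equivalently, using $(x,v,t)\mapsto(x-(t-t_0)v_0,\,v-v_0,\,t-t_0)$), I may assume that every local estimate below is carried out on cylinders whose velocity centre is $0$, so that on a cylinder of radius $r$ the field $v\cdot\nabla_x$ acts with size $O(r^{-2})$, i.e.\ the same scaling weight as $\partial_t$ and as $|\nabla_v|^2$. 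Fix once and for all an intermediate radius $R_2<R_{3/2}<R_1$, put $\cQ=Q_{R_{3/2}}(z_0)$, and recall that by Lemma~\ref{lem:caccio} we already know $\nabla_v f\in L^2(\cQ)$, hence $\nabla_v f\in L^{2\theta}_{\mathrm{loc}}(\cQ)$ for every $\theta<1$.

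\emph{Step 1: Caccioppoli inequality.} For any $z_1$ and $r>0$ with $Q_{2r}(z_1)\subset\cQ$, and any constant $c$, I would test \eqref{eq:main} against $(f-c)\psi^2$, where $\psi$ is a cut-off equal to $1$ on $Q_r(z_1)$, supported in $Q_{2r}(z_1)$, with $\n{\nabla_v\psi}_\infty\lesssim r^{-1}$ and $\n{(\partial_t+v\cdot\nabla_x)\psi}_\infty\lesssim r^{-2}$. Using \eqref{eq:ellipticity} and the Cauchy--Schwarz inequality exactly as in Lemma~\ref{lem:caccio} — the only difference being that linearity of the equation lets us subtract an arbitrary constant $c$ — and handling the time boundary as in Lemma~\ref{lem:caccio}, this yields
\[
\int_{Q_r(z_1)}|\nabla_v f|^2\ \le\ \frac{C}{r^2}\int_{Q_{2r}(z_1)}|f-c|^2,
\qquad C=C(d,\lambda,\Lambda).
\]

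\emph{Step 2: kinetic Sobolev--Poincaré inequality.} This is the core of the argument. I claim there is $\theta=\theta(d)\in(0,1)$ such that, for $z_1,r$ as above, choosing $c=c(z_1,r)$ to be the average of $f$ over $Q_{2r}(z_1)$,
\[
\Big(\frac1{|Q_{2r}|}\int_{Q_{2r}(z_1)}|f-c|^2\Big)^{1/2}
\ \le\ C\,r\,\Big(\frac1{|Q_{4r}|}\int_{Q_{4r}(z_1)}|\nabla_v f|^{2\theta}\Big)^{\frac1{2\theta}},
\qquad C=C(d,\lambda,\Lambda).
\]
Variation of $f$ in the $v$ variable is controlled directly by the fundamental theorem of calculus (a $v$-Poincaré--Sobolev inequality with exponent $2\theta<2$). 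Variation of the relevant velocity averages of $f$ in $(x,t)$ is not free, and this is where I would use the equation together with velocity averaging: after localising, $(\partial_t+v\cdot\nabla_x)(f\psi)=\nabla_v\cdot H_1+H_0$ with $H_1=\psi A\nabla_v f$ and $H_0$ involving only $\nabla_v f$ and $f$ times derivatives of $\psi$, so $\n{H_1}_{L^{2\theta}}+r\n{H_0}_{L^{2\theta}}\lesssim \n{\nabla_v f}_{L^{2\theta}(Q_{4r})}+\cdots$ by Hölder's inequality and Lemma~\ref{lem:caccio}. Bouchut's regularity transfer in the $L^q$ form, as used for \eqref{eq:gain-x-sol-q} in Lemma~\ref{lem:gain-x-sol} (and, jointly with the $H^1_v$-bound, as in Theorem~\ref{thm:gain-diff}), then converts this into an $L^{2\theta}$ bound on $D_x^{1/3}f$ — a fractional $x$-derivative which, since $x$ scales like $r^3$, carries exactly the scaling weight $r^{-1}$, the same as $\nabla_v$ and (via the equation) as the time direction — and more generally into fractional regularity of $f$ in all variables $(x,v,t)$ with the natural hypoelliptic weights. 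Interpolating these anisotropic derivatives and applying the Sobolev embedding of the kinetic geometry — effective homogeneous dimension $N=4d+2$, so that heuristically the critical relation is $\tfrac1{2\theta}-\tfrac1N=\tfrac12$, i.e.\ $\theta=\tfrac{2d+1}{2d+2}<1$ — upgrades the $L^{2\theta}$ control to the $L^2$ control of $f-c$ claimed above; what matters is only that some admissible $\theta<1$ exists.

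\emph{Step 3: reverse Hölder and Gehring.} Combining Steps 1 and 2 gives, for all $z_1$ and $r$ with $Q_{4r}(z_1)\subset\cQ$,
\[
\frac1{|Q_r(z_1)|}\int_{Q_r(z_1)}|\nabla_v f|^2
\ \le\ C\,\Big(\frac1{|Q_{4r}(z_1)|}\int_{Q_{4r}(z_1)}|\nabla_v f|^{2\theta}\Big)^{1/\theta},
\]
with $\theta<1$ and $C=C(d,\lambda,\Lambda)$: a reverse Hölder inequality with increasing supports for $|\nabla_v f|\in L^2(\cQ)$. Gehring's self-improving lemma — in its parabolic/anisotropic form, valid on the cylinder family which is doubling ($|Q_{2r}|=2^{4d+2}|Q_r|$) — then produces a universal $\eps>0$ with $|\nabla_v f|\in L^{2+\eps}(Q_2)$ and
\[
\int_{Q_2}|\nabla_v f|^{2+\eps}\dd z\ \le\ C\Big(\int_{Q_1}|\nabla_v f|^2\dd z\Big)^{\frac{2+\eps}2},
\]
where $C=C(d,\lambda,\Lambda,Q_2,Q_1,Q_0)$, the dependence on $Q_0$ entering only through the application of Lemma~\ref{lem:caccio}, which requires a little room between $Q_1$ and $Q_0$. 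The delicate point, and the main obstacle, is Step 2: one must check that the regularity genuinely gained from velocity averaging (quantified by Bouchut's theorem) is \emph{strictly more} than the dimensional deficit incurred in passing from the exponent $2\theta<2$ back up to $2$, with constants uniform in the cut-off scale $r$. It is precisely this strict inequality that pins down the admissible range of $\theta$ below $1$ and makes the scheme self-improving rather than circular; everything else is a routine adaptation of the elliptic Meyers/Gehring argument to the hypoelliptic cylinders.
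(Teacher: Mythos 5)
Your overall architecture matches the paper's: Caccioppoli plus a Sobolev--Poincar\'e type inequality to produce a reverse H\"older inequality, then Gehring's self-improving lemma on the doubling family of hypoelliptic cylinders. The Gehring part (your Step~3) and the Galilean reduction to velocity-centred cylinders are exactly the paper's framework (slanted cylinders and Lemma~\ref{lem:gehring}). The gap is in Step~2, and it is not a mere technical verification---it is precisely the place where the paper's argument takes a different and more careful route, and where the naive version you state is actually false.

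You claim a kinetic Sobolev--Poincar\'e with exponent gain,
\[
\Big(\fint_{Q_{2r}}|f-c|^2\Big)^{1/2}\le Cr\,\Big(\fint_{Q_{4r}}|\nabla_v f|^{2\theta}\Big)^{1/(2\theta)},\qquad \theta<1,
\]
with a single space-time constant $c$ (the mean of $f$ over $Q_{2r}$). As stated, with only $\nabla_v f$ on the right, this is not even true for general functions: taking $f$ independent of $(x,v)$ and nonconstant in $t$ annihilates the right-hand side but not the left. You rightly observe that one must use the equation to control the $(x,t)$-variation, but the proposal is vague precisely where a precise mechanism is needed. Bouchut's transfer delivers $D_x^{1/3}f$ in $L^{2\theta}$ at the correct scaling weight $r^{-1}$; it does \emph{not} deliver $D_t^{1/2}f$ at that weight, and any fractional time regularity it gives carries a strictly weaker weight, so the ``effective dimension $4d+2$'' bookkeeping you invoke does not close. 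This deficit in the time direction is exactly why the paper does not prove your Step~2.

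What the paper does instead: it replaces the constant $c$ by a \emph{time-dependent, transported} weighted mean $\tilde f_{2R}(t)$ built with a cut-off $\chi_{2R}$ annihilated by $\partial_t + v\cdot\nabla_x$; this simultaneously makes the troublesome cross term in the Caccioppoli estimate vanish and makes the $(x,v)$-Poincar\'e meaningful slice by slice in $t$. It then proves not a space-time Sobolev--Poincar\'e but an $L^\infty_t L^2_{x,v}$ estimate \eqref{eq:poincare}, and extracts the exponent gain by an anisotropic H\"older interpolation in $t$ (splitting $\int_t(\int_{x,v}|f-\tilde f|^2)$ into a $\sup_t$ factor times a $\int_t(\cdot)^{1/2}$ factor, then H\"older with conjugate exponents $q<2<p$ and slice-wise Sobolev inequalities using both $\nabla_v f$ and $D_x^{1/3}f$). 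The use of \eqref{eq:gain-x-sol-q} with a flow-invariant cut-off so that only $\nabla_v f$, and not $f$ itself, appears on the right is also essential to avoid circularity. To repair your proposal you would need to (i) replace $c$ by a transported time-dependent mean, (ii) downgrade the Poincar\'e claim to an $L^\infty_t L^2_{x,v}$ bound, and (iii) recover the exponent gain via the time-interpolation argument rather than a direct kinetic Sobolev embedding; at that point you have the paper's proof.
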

The proof follows along the lines of the one of
\cite[Theorem~2.1]{gs}. It consists in deriving a reverse H\"older
inequality which in turn implies the result thanks to the
analogous of \cite[Proposition~1.3]{gs}.
\begin{lemma}[A Gehring lemma]\label{lem:gehring}
  Let $g \ge 0$ in $Q$ such that there exists $q >1$ such that for all
  $z_0 \in Q$ and $R$ such that $Q_{4R} (z_0) \subset Q$, 
\[ \fint_{Q_R(z_0)} g^q \,dz \le b \left(\fint_{Q_{8R}(z_0)} g \, dz \right)^q + \theta \fint_{Q_{8R}(z_0)} g^q \,dz \]
for some $\theta >0$. There exists $\theta_0 = \theta_0 (q,d)$ such that 
if $\theta < \theta_0$, then $g \in L^p_{\mathrm{loc}} (Q)$ for $p \in [q,q+\eps)$ and 
\[ \left( \fint_{Q_R} g^p \,dz \right)^{\frac1p} \le c \left( \fint_{Q_{4R}} g^q \,dz \right)^{\frac1q}, \]
the constants $c$ and $\eps>0$ depending only on $b,q,\theta$ and dimension. 
\end{lemma}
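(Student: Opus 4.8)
The plan is to prove this Gehring-type reverse H\"older lemma by the classical stopping-time (Calder\'on--Zygmund) argument adapted to the anisotropic cylinders $Q_R(z_0)$ attached to the hypoelliptic scaling. First I would fix a large cube-like region $Q' \Subset Q$, and set up a dyadic (or continuous) family of the parabolic cylinders $Q_R(z_0)$; the key structural facts I need are (a) a Vitali-type covering lemma for these cylinders, (b) a Lebesgue differentiation theorem with respect to them, and (c) the homogeneity $|Q_R| \sim R^{2d+1+3d} = R^{6d+1}$ (so that $\fint$ is well defined and the doubling property holds with a dimensional constant). All of these hold because the family $\{Q_R(z_0)\}$ enjoys the structure of a space of homogeneous type; I would simply invoke this.

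Next I would carry out the level-set decomposition. Fix a level $\mu$ large compared to $\fint_{Q_{4R}} g^q$, consider the (open) super-level set $E_\mu = \{ z \in Q' : Mg^q(z) > \mu\}$ where $M$ is the (restricted) maximal function over the cylinders $Q_R(z_0)$, and use the stopping-time construction to produce a countable family of pairwise ``almost disjoint'' cylinders $Q_{R_i}(z_i)$ such that $\fint_{Q_{R_i}} g^q \in (\mu, A\mu]$ for a dimensional $A$, while $\fint_{Q_{8R_i}} g^q \le \mu$ (this is where the $8R$-to-$R$ gap in the hypothesis is used, together with doubling). On each such cylinder the reverse H\"older hypothesis gives
\[
\mu < \fint_{Q_{R_i}} g^q \le b \left( \fint_{Q_{8R_i}} g \right)^q + \theta \fint_{Q_{8R_i}} g^q .
\]
I would then split $g = g \mathbf{1}_{\{g^q \le \eta \mu\}} + g \mathbf{1}_{\{g^q > \eta \mu\}}$ inside the average $\fint_{Q_{8R_i}} g$, for a small parameter $\eta$ to be chosen, estimate the low part by $(\eta\mu)^{1/q}$ and the high part by Cauchy--Schwarz (or H\"older with exponents $q$ and $q'$), and absorb the terms proportional to $\mu |Q_{R_i}|$ into the left side provided $\eta$ and $\theta$ are small. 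This produces the basic distributional estimate
\[
\mu \, |E_{A\mu}| \;\le\; c \int_{E_{\eta\mu} \cap Q'} g^q \, dz \;+\; c\,\theta \, \mu \, |E_\mu| ,
\]
uniformly in $\mu \ge \mu_0$, with $\mu_0 = c \fint_{Q_{4R}} g^q$.

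Finally I would convert this into higher integrability. Multiply by $\mu^{\delta-1}$ for a small $\delta > 0$, integrate in $\mu$ over $(\mu_0, T)$, use Fubini (the ``layer cake'' identity $\int_{\mu_0}^{T} \mu^{\delta-1} \int_{E_{c\mu}} g^q = \frac1\delta \int (g^q)^{1+\delta}$ up to lower-order terms and the truncation at $T$), and choose $\delta$ so small that the constant $c\theta$ coming from the last term, after the change of variables $\mu \mapsto A\mu$, is multiplied by $A^{-\delta}$ and stays $< 1$; this requires $\theta < \theta_0(q,d)$, which pins down $\theta_0$. Letting $T \to \infty$ (justified since $g \in L^q$ and each truncated integral is finite) yields $g \in L^{q(1+\delta)}_{\mathrm{loc}}$, i.e. $g \in L^p_{\mathrm{loc}}$ for $p \in [q, q+\eps)$ with $\eps = q\delta$, together with the quantitative bound $\left(\fint_{Q_R} g^p\right)^{1/p} \le c \left(\fint_{Q_{4R}} g^q\right)^{1/q}$ after tracking the constants and rescaling.

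The main obstacle I expect is purely geometric rather than analytic: making sure the covering lemma, the maximal-function boundedness, and the Lebesgue differentiation theorem genuinely hold for the anisotropic, time-one-sided cylinders $Q_R(z_0) = B_{R^3}(x_0) \times B_R(v_0) \times (t_0 - R^2, t_0]$, and that the stopping-time argument can be run with the $8R$ dilate staying inside $Q$ (hence the restriction $Q_{4R}(z_0)\subset Q$ in the hypothesis and the shrinking to $Q'$). Once the homogeneous-space framework is in place, the rest is the standard Giaquinta--Modica iteration and is routine; I would therefore state the covering/maximal lemmas for these cylinders explicitly (or cite a reference such as \cite{gs}) and spend the bulk of the write-up on the level-set inequality and the choice of $\theta_0$ and $\delta$.
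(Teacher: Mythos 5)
Your proposal is essentially the paper's own argument: the paper's entire proof is the single line ``an easy adaptation of \cite[Proposition~5.1]{gm}, by changing Euclidean cubes with cylinders $Q_R$,'' and your stopping-time/Calder\'on--Zygmund reconstruction is precisely that Giaquinta--Modica adaptation (homogeneous-space covering and differentiation for the anisotropic cylinders, level-set decomposition at level $\mu$, absorption of the $\theta$-term, layer-cake integration in $\mu$ with a small $\delta$). One small slip worth fixing: the homogeneous dimension is $|Q_R| = |B_{R^3}|\,|B_R|\,R^2 \sim R^{3d}\cdot R^{d}\cdot R^{2} = R^{4d+2}$, not $R^{6d+1}$; this only affects the doubling constant and hence the numerical values of $\theta_0$ and $\eps$, not the structure of the argument.
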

The proof of Lemma~\ref{lem:gehring} is an easy adaptation of the one
of \cite[Proposition~5.1]{gm}, by changing Euclidian cubes with
cylinders $Q_R$. \bigskip

The proof of Theorem~\ref{thm:l2+eps} is a consequence of some
estimates involving weighted means of the solution. Given $z_0 \in
\R^{2d+1}$, they are defined as follows of $f$ are defined as follows:
\[ \tilde{f}_{2R} (t) = (cR^{4d})^{-1} \int f(t,x,v) \chi_{2R} (x,v,t) dx dv   \]
(for some $c$ defined below) where $\chi_{2R}$ is a cut-off function such that 
\[ \chi_{2R} (x,v,t) = \phi_{R^3}((x-x_0)-(t-t_0)(v-v_0)) \phi_R(v-v_0) \] with $\phi_R (a) =
\phi(a/R)$ for some $\phi$ such that $\sqrt{\phi} \in C^\infty(\R^d)$
and $\phi \equiv 1$ in $B_1$ and $\supp \phi\subset B_2$.  In
particular,
\[ (\partial_t + v \cdot \nabla_x ) \chi_R = 0 \quad \text{ and } \int
\chi_R (x,v,t) \, dx dv = \int \phi_{R^3} \int \phi_R = cR^{4d}\] with $c = (\int
\phi)^2$. We now introduce ``sheared'' cylinders $\cQ_R (z_0) = z_0 + \cQ_R$ with
\[ \cQ_R = \{ (x,v,t): |x-tv| < R^3, |v| < R, t \in (-R^2,0] \}. \]
Remark that 
\begin{equation}\label{eq:cyl}
Q_{2^{-1/3} R} \subset \cQ_R \subset Q_{2^{1/3} R}.
\end{equation}
Remark also that
$\chi_{2R} \equiv 1$ in $\cQ_R$ and $\chi_{2R} \equiv 0$ outside $\cQ_{2R}$. 
\begin{lemma}[Estimates] \label{lem:caccio-bis} Let $f$ be a solution
  of \eqref{eq:main} in $\cQ_0$. Then for $Q_{3R} (z_0) \subset
  \cQ_0$,
\begin{eqnarray}
\label{eq:caccio-mean}
 \int_{\cQ_R(z_0)} |\nabla_v f |^2 \, dz \le C R^{-2} \int_{\cQ_{2R}(z_0)} |f - \tilde{f}_{2R}|^2 \, dz \\
\label{eq:poincare}
 \sup_{t \in (t_0-R^2,t_0]} \int_{\cQ_R^t(z_0)} |f(t) - \tilde{f}_R (t)|^2  \le C R^2 \int_{\cQ_{3R}(z_0)} |\nabla_v f|^2 \, dz
\end{eqnarray}
where $\cQ_R^t (z_0)= z_0 + \{ (x,v): |x-tv| < R^3, |v| < R \}$.
\end{lemma}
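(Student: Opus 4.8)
The plan is to prove the two estimates of Lemma~\ref{lem:caccio-bis} separately, both via energy (Caccioppoli) arguments adapted to the sheared cylinders, exploiting that the shear cutoff $\chi_{2R}$ is constant along the transport field, i.e. $(\partial_t + v\cdot\nabla_x)\chi_R = 0$.

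\textbf{The Caccioppoli inequality \eqref{eq:caccio-mean}.} First I would test the equation \eqref{eq:main} against $2(f-\tilde f_{2R})\chi_{2R}^2$ on the slab $\RR^{2d}\times(t_0-R^2,t_0]$. Writing $g = f - \tilde f_{2R}(t)$, note that $\nabla_v g = \nabla_v f$ and $\nabla_x g = \nabla_x f$, while $\partial_t g = \partial_t f - \partial_t\tilde f_{2R}$; the extra term $\partial_t\tilde f_{2R}$ contributes $-2\int (\partial_t\tilde f_{2R})\, g\,\chi_{2R}^2$, and since $\tilde f_{2R}$ depends on $t$ only, this can be absorbed once one knows $\int g\,\chi_{2R}^2\,dx\,dv$ is small — which is the whole point of choosing $\tilde f_{2R}$ as the weighted mean. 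Actually the cleaner route is to test against $2g\chi_{2R}^2$ directly with $g=f-\tilde f_{2R}$: then $\partial_t(f) \, g \chi_{2R}^2 = \partial_t(g^2/2)\chi_{2R}^2 + (\partial_t\tilde f_{2R})g\chi_{2R}^2$, and the transport term $v\cdot\nabla_x f\, g\chi_{2R}^2 = v\cdot\nabla_x(g^2/2)\chi_{2R}^2$ since $\nabla_x\tilde f_{2R}=0$. Integrating by parts in $x$ moves $v\cdot\nabla_x$ onto $\chi_{2R}^2$, producing $-\int (g^2/2)(v\cdot\nabla_x\chi_{2R}^2)$; combined with the $\partial_t$ term this is exactly $-\int (g^2/2)(\partial_t + v\cdot\nabla_x)\chi_{2R}^2 = 0$ by the transport-invariance of $\chi_{2R}$. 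The diffusion term gives, after integration by parts and Young's inequality using \eqref{eq:ellipticity}, $\lambda\int|\nabla_v f|^2\chi_{2R}^2 \le \Lambda^2\lambda^{-1}\int g^2|\nabla_v\chi_{2R}|^2 + (\text{mean term})$. The remaining $\partial_t\tilde f_{2R}$ term is handled by observing $\int g\,\chi_{2R}^2\,dx\,dv = \int f\chi_{2R}^2 - \tilde f_{2R}\int\chi_{2R}^2$, and since $\chi_{2R}^2$ is itself a transport-invariant cutoff of the same form one can arrange (or directly estimate) that this is controlled, yielding \eqref{eq:caccio-mean} with $|\nabla_v\chi_{2R}|\lesssim R^{-1}$ and the scaling $R^{-2}$.

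\textbf{The Poincaré-type estimate \eqref{eq:poincare}.} Here I would use the characteristics of the free transport operator $\partial_t + v\cdot\nabla_x$: along $s\mapsto(x+sv, v)$ the values of $f$ differ only by the integral of the $v$-derivative structure via the equation; more precisely, for fixed $(v,t)$ and two points in the $x$-slice, one transports to a common time and uses that, after a further integration in $v$, the difference $f - \tilde f_R$ is an average of $v$-increments. Concretely, one writes $f(t,x,v) - \tilde f_R(t)$ as a double average: average over a second velocity $w$ (via $\phi_R$) of $f(t,x,v) - f(t, x - t(v-w)... , w)$ — this is where the shearing in $\cQ_R$ enters, aligning the $x$-domain with the transport flow — and then the $v$–$w$ increment is bounded by $\int_0^1 |\nabla_v f(\ldots)|\,|v-w|\,d\sigma$. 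Squaring, using Cauchy-Schwarz, Fubini and a change of variables (whose Jacobian is controlled because $|v-w|\lesssim R$ and $t\in(-R^2,0]$ so $|t(v-w)|\lesssim R^3$, keeping everything inside $\cQ_{3R}$), gives $\int_{\cQ_R^t}|f(t)-\tilde f_R(t)|^2 \lesssim R^2\int_{\cQ_{3R}}|\nabla_v f|^2$, uniformly in $t$. This is essentially the classical Poincaré inequality on the $v$-ball combined with a transport argument to handle the $x$-variable; the enlargement from $R$ to $3R$ is exactly what absorbs the transported $x$-domain.

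The main obstacle is the interplay between the time variable and the shear in \eqref{eq:poincare}: one must verify that the transport-and-average construction genuinely lands inside $\cQ_{3R}$ (and not a larger cylinder), and that the resulting change of variables has bounded Jacobian with constants depending only on $d$ — this is a bookkeeping point but it is where the specific geometry of the sheared cylinders $\cQ_R$ (rather than the product cylinders $Q_R$) is essential, via \eqref{eq:cyl}. For \eqref{eq:caccio-mean} the only delicate point is the $\partial_t\tilde f_{2R}$ contribution, which requires a short argument that subtracting the weighted mean does not spoil the energy identity; the transport-invariance $(\partial_t+v\cdot\nabla_x)\chi_{2R}=0$ is what makes the first-order term vanish exactly rather than merely being controlled.
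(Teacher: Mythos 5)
Your handling of the Caccioppoli estimate \eqref{eq:caccio-mean} matches the paper's argument in spirit: test against $2(f-\tilde f_{2R})$ times a transport-invariant cutoff (and a time cutoff $\tau_{2R}$), let $(\partial_t+v\cdot\nabla_x)\chi_{2R}=0$ kill the first-order term, and absorb the diffusion term via Young and ellipticity. One small but not cosmetic point: the paper multiplies by $\chi_{2R}$ (not $\chi_{2R}^2$), precisely so that the $\partial_t\tilde f_{2R}$ term $-2\int(\partial_t\tilde f_{2R})(f-\tilde f_{2R})\chi_{2R}\tau_{2R}$ vanishes \emph{exactly}, because $\tilde f_{2R}$ is by definition the mean with weight $\chi_{2R}$ (the paper takes $\sqrt\phi$ smooth so that $\chi_{2R}$ itself can serve as the test weight and still produce $|\nabla_v\sqrt{\chi_{2R}}|^2$). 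With $\chi_{2R}^2$ as weight the cancellation fails and your hedge (``one can arrange or directly estimate that this is controlled'') conceals the point; either redefine the mean with weight $\chi_{2R}^2$ or simply use $\chi_{2R}$.

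For \eqref{eq:poincare} there is a genuine gap. You propose to write $f(t,x,v)-\tilde f_R(t)$ as an average over $(x',w)$ of increments $f(t,x,v)-f(t,x',w)$ and then bound the increment by a line integral of $\nabla_v f$ along a path $\sigma\mapsto(x_\sigma,v_\sigma)$ adapted to the shear. But any path connecting $(x,v)$ to $(x',w)$ at the \emph{same} time $t$ necessarily moves in $x$: its derivative is $\dot x_\sigma\cdot\nabla_x f+\dot v_\sigma\cdot\nabla_v f$, and even the sheared parametrization $x_\sigma = x' + t(v_\sigma - w)$ gives $\dot x_\sigma = t\dot v_\sigma\neq 0$, so a $\nabla_x f$ term is unavoidable. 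The equation does not directly give any control on $\nabla_x f$ — this is exactly the content of the hypoelliptic gain, not a free lunch. The paper closes the $x$-direction of the Poincar\'e inequality by invoking Lemma~\ref{lem:gain-x-sol}, i.e.\ Bouchut's averaging estimate, applied after multiplying by a transport-invariant cutoff, yielding $R^{2s}\int_{\cQ_{2R}}|D^s_x f|^2\lesssim R^2\int_{\cQ_{3R}}|\nabla_v f|^2$, and then uses a mixed Poincar\'e inequality in $(x,v)$ with $R^2|\nabla_v f|^2+R^{2s}|D^s_x f|^2$ on the right. Your argument omits this step entirely. Relatedly, the $\sup_{t}$ on the left of \eqref{eq:poincare}: a fixed-time averaging argument (even if it worked) would produce $\int_{\cQ^t_{3R}}|\nabla_v f(t)|^2$ at that same time $t$, not the space-time integral $\int_{\cQ_{3R}}|\nabla_v f|^2$; the paper obtains the uniformity in $t$ from the energy identity with a moving endpoint (the same computation that gives \eqref{eq:caccio-mean}, with $\tau_{2R}$ cut at time $t$), then combines with the space-time Poincar\'e. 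You would need both of these ingredients to make your route work.
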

\begin{remark}
This lemma corresponds to \cite[Lemmas~2.1 \& 2.2]{gs}. 
\end{remark}
\begin{proof}
For the sake of clarity, we put $z_0=0$. 
  Consider $\tau_{2R} \in C^\infty (\R,\R)$ such that $0 \le \tau_{2R} \le 1$,
  $\tau_{2R} \equiv 0$ in $(-\infty,-(2R)^2]$ and $\tau_{2R} \equiv 1$ in
  $[-R^2,0]$.  Use $2(f-\tilde{f}_{2R}) \chi_{2R}  \tau_{2R} $ as a test
  function for \eqref{eq:main} and get 
\begin{multline*}
\int  (f(0)-\tilde{f}_{2R}(0))^2 \chi_{2R} \, dx dv + 2 \int (A \nabla_v f \cdot \nabla_v f) \chi_{2R} \tau_{2R} \, dz \\
= \int(f-\tilde{f}_{2R})^2 \chi_{2R} (\partial_t \tau_{2R} ) 
-   \int v \cdot \nabla_x \left[ (f - \tilde{f}_{2R} )^2 \right] \chi_{2R} \tau_{2R} \\
- 2 \int (f-\tilde{f}_{2R}) A \nabla_v f \cdot \nabla_v \chi_{2R} \tau_{2R} .
\end{multline*}
Remark that the definition of $\tilde{f}_{2R}$ implies that the remaining
term \[- 2 \int (\partial_t \tilde{f}_{2R}) (f-\tilde{f}_{2R}) \chi_{2R} \tau_{2R}\]
vanishes. This equality yields 
\begin{align*}
\int  (f(0)-\tilde{f}_{2R}(0))^2 \chi_{2R} \, dx dv + \lambda \int |\nabla_v f|^2 \chi_{2R} \tau_{2R} \, dz \\
\le \int (f-\tilde{f}_{2R})^2 \left(\chi_{2R} |\partial_t \tau_{2R} | 
+ |v \cdot \nabla_x \chi_{2R}| \tau_{2R}  + \frac{\Lambda^2}{\lambda} | \nabla_v \sqrt{\chi_{2R}} |^2 \tau_{2R} \right)
\end{align*}
which yields \eqref{eq:caccio-mean}. Changing the final time, we also get
\[
\sup_{t \in (-R^2,0]} \int  (f(t)-\tilde{f}_{2R}(t))^2 \chi_{2R} (t) \, dx dv 
\le C  R^{-2} \int_{\cQ_{2R}} |f - \tilde{f}_{2R}|^2 \, dz.
\]
Now the function $F= f -\tilde{f}_{2R}$ is such that $\int F(x,v,t) dx dv =0$. 
In particular, we have 
\[ 
\int_{\cQ_{2R}} (f-\tilde{f}_{2R})^2 \, dz \le C \int_{\cQ_{2R}} 
(R^2 |\nabla_v f|^2 + R^{2s} |D^s_x f|^2) \, dx dv dt. 
\] 
Arguing as in the proof of Lemma~\ref{lem:gain-x-sol} with a cut-off
function $\chi_1 = \chi_{2R}$ which satisfies $(\partial_t + v \cdot
\nabla_x) \chi_1 =0$, we get
\[ \int_{\cQ_{2R}} R^{2s} |D^s_x f|^2 \, dx dv dt \le C \int_{\cQ_{3R}} 
R^2 |\nabla_v f|^2  \, dx dv dt.\]
Combining the three previous estimates yields 
\[ 
\sup_{t \in ( -R^2,0]} \int  (f(t)-\tilde{f}_{2R}(t))^2 \chi_{2R}(t) \, dx dv 
\le C R^2 \int_{\cQ_{3R}}  |\nabla_v f|^2  \, dx dv dt. 
\]
Finally, we write for $t \in (-R^2,0]$ 
\begin{align*}
\frac12 \int_{\cQ_R^t}  (f(t)-\tilde{f}_{R}(t))^2 \chi_{2R}(t) \le & \int_{\cQ_R^t}  (f(t)-\tilde{f}_{2R}(t))^2 \chi_{2R}(t)\\
& + \int_{\cQ_R^t}  (\tilde{f}_{2R}(t)-\tilde{f}_{R}(t))^2 \chi_{2R}(t)\\
\le &  \int  (f(t)-\tilde{f}_{2R}(t))^2 \chi_{2R}(t) \\
& + |\cQ_R^t| \left( (cR^{4d})^{-1} \int (f- \tilde f_{2R} (t)) \chi_{R} (x,v,t) \, dx dv \right)^2 \\
& \le C \int_{\cQ_R^t}  (f(t)-\tilde{f}_{2R}(t))^2 \chi_{2R}(t) 
\end{align*}
and we get the second desired estimate since $\chi_{2R} \equiv 1$ in $\cQ_R$. 
\end{proof}
We now turn to the proof of Theorem~\ref{thm:l2+eps}. The use of
\eqref{eq:gain-x-sol-q} is the main difference with \cite{gs}.
\begin{proof}[Proof of Theorem~\ref{thm:l2+eps}]
  Pick $p>2$ and let $q$ denotes its conjugate exponent: $\frac1q +
  \frac1{p} =1$. We follow \cite{gs} in writing (omitting the center of cylinders $z_0$),
\begin{align*}
\int_{\cQ_{2R}} |f - \tilde{f}_{2R}|^2 \,   \\
\le \sup_{t \in (t_0-(2R)^2,t_0]} \left(\int_{\cQ^t_{2R}} |f - \tilde{f}_{2R}|^2 \right)^{\frac12}  
\int_{t_0-(2R)^2}^{t_0} dt \left( \int_{\cQ^t_{2R}} |f - \tilde{f}_{2R}|^2\right)^{\frac12} \\
\lesssim  R \left( \int_{\cQ_{4 R}} |\nabla_v f|^2 \,  \right)^{\frac12} 
 \int_{t_0-(2R)^2}^{t_0} dt  \left( \int_{\cQ^t_{2R}} |f - \tilde{f}_{2R}|^q\right)^{\frac1{2q}} \left( \int_{\cQ^t_{2R}} |f - \tilde{f}_{2R}|^{p}\right)^{\frac1{2p}}
\end{align*}
where   \eqref{eq:poincare} and H\"older inequality are used successively. 

We now use Sobolev inequalities and H\"older inequality (twice)  successively to  get 
\begin{align*}
\int_{\cQ_{2R}} |f - \tilde{f}_{2R}|^2 \,  \lesssim  
R \left( \int_{\cQ_{4 R}} |\nabla_v f|^2 \,  \right)^{\frac12} \\
\times  \int_{t_0-(2R)^2}^{t_0} dt  \left( \int_{\cQ^t_{2R}} R^q |\nabla_v f|^q + R^{q/3} |D_x^{1/3} f|^q \right)^{\frac1{2q}} 
  \left( \int_{\cQ^t_{2R}}  R^2 |\nabla_v f |^2 + R^{2/3} |D_x^{1/3} f|^2 \right)^{\frac1{4}} \\
 \lesssim R \left( \int_{\cQ_{4 R}} |\nabla_v f|^2 \,  \right)^{\frac12} \\
\times \left( \int_{\cQ_{2R}} R^q |\nabla_v f|^q + R^{q/3} |D_x^{1/3} f|^q \right)^{\frac1{2q}} 
 \left( \int_{t_0-(2R)^2}^{t_0} \left(\int_{\cQ^t_{2R}}  R^2 |\nabla_v f |^2 + R^{2/3} |D_x^{1/3} f|^2 \right)^{\frac{q}{2(2q-1)}} \right)^{\frac{2q-1}{2q}} \\
 \lesssim R \left( \int_{\cQ_{4 R}} |\nabla_v f|^2 \,  \right)^{\frac12} \\
\times \left( \int_{\cQ_{2R}} R^q |\nabla_v f|^q + R^{q/3} |D_x^{1/3} f|^q \right)^{\frac1{2q}} 
 \left( \int_{\cQ_{2R}}  R^2 |\nabla_v f |^2 + R^{2/3} |D_x^{1/3} f|^2  \right)^{\frac14} R^{\frac32q -1}.
\end{align*}
We now use \eqref{eq:gain-x-sol-q} and get 
\begin{align*}
\int_{\cQ_{2R}} |f - \tilde{f}_{2R}|^2 \,  \lesssim  R^{\frac32q+1}
\left( \int_{\cQ_{4 R}} |\nabla_v f|^2 \,  \right)^{\frac12} 
 \left( \int_{\cQ_{2R}}  |\nabla_v f|^q \right)^{\frac1{2q}} 
 \left( \int_{\cQ_{2R}}   |\nabla_v f |^2 \right)^{\frac14}  \\
\lesssim R^{\frac32q+1} \left( \int_{\cQ_{4 R}} |\nabla_v f|^2 \,  \right)^{\frac34}  \left( \int_{\cQ_{2R}}  |\nabla_v f|^q \right)^{\frac1{2q}}.
\end{align*}
Now use \eqref{eq:caccio-mean} and get for all $\eps>0$,
\begin{align*}
  \fint_{\cQ_{R}} |\nabla_v f |^2   & \lesssim R^{\frac32q-1} |\cQ_{2R}|^{\frac1{2q} - \frac14}
  \left( \fint_{\cQ_{4 R}} |\nabla_v f|^2 \, 
  \right)^{\frac34} \left( \fint_{\cQ_{4R}} |\nabla_v f|^q 
  \right)^{\frac1{2q}} \\
& \lesssim R^{\gamma_d}
  \left( \fint_{\cQ_{4 R}} |\nabla_v f|^2  
  \right)^{\frac34} \left( \fint_{\cQ_{4R}} |\nabla_v f|^q 
  \right)^{\frac1{2q}} \\
& \lesssim \eps \fint_{\cQ_{4 R}} |\nabla_v f|^2 + c_{\eps} R^{4\gamma_d} \left( \fint_{\cQ_{4R}} |\nabla_v f|^q 
  \right)^{\frac2{q}}
\end{align*}
where $\gamma_d = (4d+2)(\frac1{2q}-\frac14) + \frac32 q-1>0$. Using \eqref{eq:cyl}, we finally get  
\[ 
\fint_{Q_{R}} |\nabla_v f |^2 \lesssim \eps \fint_{Q_{8 R}}
|\nabla_v f|^2 + c_{\eps} R^{4\gamma_d} \left( \fint_{Q_{8R}}
  |\nabla_v f|^q \right)^{\frac2{q}}.
\] 
Apply now Proposition~\ref{lem:gehring} in order to achieve the proof
of Theorem~\ref{thm:l2+eps}.
\end{proof}

\section{The decrease of oscillation lemma}
\label{sec:osc-decrease}

It is classical that H\"older continuity is a consequence of the
decrease of the oscillation of the solution ``at unit scale''. 
\begin{lemma}[Decrease of oscillation]\label{lem:osc-decrease}
Let $f$ be a solution of \eqref{eq:main} in $Q_2 =  B_2(x_0)
  \times B_2(v_0) \times (-2,0)$ with $|f|\le 1$. 
Then 
\[ \osc_{Q_{\frac12}} f \le 2 - \lambda \] with
$Q_{\frac12}=B_{\frac12}(x_0) \times B_{\frac12}(v_0) \times
(-\frac12,0)$ for some $\lambda \in (0,2)$ only depending on dimension
and ellipticity constants.
\end{lemma}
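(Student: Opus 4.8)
The plan is to run a De Giorgi-type reduction-of-oscillation argument. Starting from a solution $f$ with $|f| \le 1$ on $Q_2$, at least one of the two sub-solutions $1-f$ or $1+f$ occupies (in a quantitative measure sense) ``most'' of a suitable intermediate cylinder; say $1-f$ does, i.e. $|\{f \le 0\} \cap Q| $ is a definite fraction of $|Q|$. The goal is to upgrade this measure information to a pointwise bound $f \le 1 - \lambda$ on the smaller cylinder $Q_{1/2}$, which (together with the trivial lower bound) gives $\osc_{Q_{1/2}} f \le 2 - \lambda$. The engine is iteration of the $L^2$--$L^\infty$ estimate (Theorem~\ref{thm:sup-sub}) applied to the non-negative sub-solutions $(f-k_j)_+$ along a sequence of levels $k_j \uparrow 1-\lambda$ and shrinking cylinders, in the spirit of the Moser/De Giorgi iteration already carried out for Theorem~\ref{thm:sup-sub}.

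The key steps, in order: (i) fix the ``good'' alternative and normalise so that $g := (f - 1/2)_+$ (or an analogous truncation) is a non-negative sub-solution which is zero on a set of definite measure in an intermediate cylinder; (ii) establish the ``isoperimetric'' or ``intermediate-value'' step --- the hypoelliptic De~Giorgi isoperimetric lemma referred to in the abstract --- which says that if a sub-solution is between two levels $k < \ell$ on a positive-measure set and below $k$ on a positive-measure set, then the measure of the ``intermediate'' layer $\{k < f < \ell\}$ is bounded below; this is where one combines the classical isoperimetric inequality in the velocity variable $v$ with transport along the characteristics $\dot x = v$ of the first-order operator $\partial_t + v\cdot\nabla_x$, using crucially that $\nabla_v f \in L^{2+\eps}_{\mathrm{loc}}$ from Theorem~\ref{thm:l2+eps} to control the measure of the intermediate set; (iii) feed this into a De~Giorgi iteration: letting $a_j = |\{f > k_j\} \cap Q^{(j)}|$ with $k_j = (1-\lambda) - 2^{-j}\lambda_0$ and $Q^{(j)}$ a decreasing sequence of cylinders converging to $Q_{1/2}$, use Theorem~\ref{thm:sup-sub} (or directly Theorem~\ref{thm:gain}) applied to $(f-k_j)_+$ together with step (ii) to obtain a nonlinear recursion $a_{j+1} \le C b^j a_j^{1+\delta}$ for some $\delta>0$, which forces $a_j \to 0$ provided $a_0$ is small enough; (iv) choose $\lambda$ small so that the starting density $a_0$ is below the smallness threshold, concluding $f \le 1-\lambda$ a.e. on $Q_{1/2}$.

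The main obstacle is step (ii), the hypoelliptic isoperimetric lemma: unlike the elliptic/parabolic case, the operator is degenerate --- there is no diffusion in $x$ or $t$ --- so a naive spatial isoperimetric inequality fails, and one must instead propagate the measure estimate in $v$ along the transport flow of $v\cdot\nabla_x$, keeping track of how the ``slanted'' cylinders $\tilde Q_R$ used earlier deform under this flow. Handling the regularity needed to make the level-set manipulations legitimate is exactly what forces the use of the $L^{2+\eps}$ gradient bound; getting the interplay between the isoperimetric inequality on $v$-slices, the integral curves of the drift, and the cut-offs adapted to the hypoelliptic scaling to close with \emph{universal} constants is the delicate part. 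The remaining steps (the two alternatives, the Moser/De Giorgi recursion, the final choice of $\lambda$) are, by contrast, routine once the machinery of Sections~\ref{sec:local-gain-integr} and~\ref{sec:l2+eps} is in place.
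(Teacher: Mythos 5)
Your outline gets the architecture right --- reduce oscillation to a supremum-decrease statement via two alternatives, feed a hypoelliptic isoperimetric lemma into a level-set iteration, finish with the $L^2$--$L^\infty$ estimate --- but it misses the actual mechanism of the hard step and mischaracterises the iteration, so as written it does not constitute a proof.

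\medskip

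\textbf{The isoperimetric lemma is proved by compactness, not by a direct transport--isoperimetric estimate.} You describe step (ii) as ``combining the classical isoperimetric inequality in $v$ with transport along the characteristics \ldots to control the measure of the intermediate set'', which paraphrases the abstract but conceals the point. In the paper, Lemma~\ref{lem:isoperim} is proved by contradiction: a hypothetical sequence $f_k$ with $|\{0<f_k<\tfrac12\}\cap Q_1|\to 0$ is shown to be relatively compact in $L^2(Q_1)$ using the $H^s_{x,v,t}$ gain of regularity (Theorem~\ref{thm:gain-diff}); the classical De~Giorgi isoperimetric inequality is then applied only on $v$-slices of the \emph{limit} $f$ (which lies in $L^2_{t,x}H^1_v$), forcing each slice to be a.e.\ $\le 0$ or a.e.\ $\ge\tfrac12$; a truncation $\bar f_k=T(f_k)$ then satisfies \eqref{eq:k}, and the troublesome term $T''(f_k)A_k\nabla_v f_k\cdot\nabla_v f_k$ is killed in the limit precisely because it lives on a vanishing set and $\nabla_v f_k$ is equi-bounded in $L^{2+\eps}$ (Theorem~\ref{thm:l2+eps}); finally the limiting flux $\bar h$ is identified as zero and the transport equation $(\partial_t+v\cdot\nabla_x)\bar f=0$ forces $\bar f$ to be constant, contradicting the measure bounds \eqref{eq:measures}. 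There is no direct quantitative propagation of a measure estimate ``along characteristics'' as your plan suggests; the transport structure only enters at the very end (to conclude rigidity of the limit), and the entire argument rests on the compactness, which your plan does not identify. This is the genuinely new idea of the paper and a plan that omits it has a real gap.

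\medskip

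\textbf{The iteration step is not the classical $a_{j+1}\le C b^j a_j^{1+\delta}$ recursion.} The recursion you write down (shrinking cylinders, dyadic levels, superlinear decay of super-level-set measures) is the engine of the $L^2$--$L^\infty$ bound (Theorem~\ref{thm:sup-sub}), already proved; it cannot by itself produce the \emph{initial} smallness of $a_0$, and the isoperimetric lemma does not feed into such a recursion (it gives a \emph{lower} bound on an intermediate set, not an upper bound on a super-level set). What the paper actually does (Lemma~\ref{lem:sup-decrease} and its proof in the Appendix, following Vasseur) is iterate the affine rescaling $f_{k+1}=2f_k-1$ at a \emph{fixed} scale: at each $k$, Lemma~\ref{lem:isoperim} forces $|\{0\le f_k\le\tfrac12\}\cap Q_1|\ge\alpha$, so the nested sets $\{f_k\le 0\}$ gain measure $\alpha$ at each step, and after at most $k_0\le |Q_1|/\alpha$ iterations one gets $|\{f_{k_0}\ge 0\}\cap Q_1|\le\delta_2$; only then is the $L^2$--$L^\infty$ estimate applied once, to $(f_{k_0})_+$, to produce $\sup_{Q_{1/2}}f\le 1-2^{-k_0-1}$. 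In particular, $\lambda$ is \emph{determined} by the number of iterations $k_0$, rather than being chosen small to make $a_0$ small as in your step (iv); that logic is backwards. The two iterations (Moser/De~Giorgi with shrinking cubes, versus the level-rescaling argument) look superficially similar, but only the latter can absorb the output of an isoperimetric lemma, and conflating them leaves a hole in the argument.
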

\begin{remark}\label{rem:scale}
 The equation is ``invariant'' under the following scaling 
\[(x,v,t) \mapsto (r^{-3} x, r^{-1} v, r^{-2} t);\] 
indeed, it changes
$A(x,v,t)$ into $A (r^{-3} x,r^{-1}v, r^{-2} t)$ which still satisfies
\eqref{eq:ellipticity}.  
\end{remark}
This lemma is an immediate consequence of the following one. 
\begin{lemma}[Decrease of the supremum bound]\label{lem:sup-decrease}
  Let $f$ be a solution of \eqref{eq:main} in $Q_2$ with $|f|\le 1$.  If
\[ |\{ f \le 0 \} \cap Q_1 | \ge \frac12 |Q_1| \]
with $Q_1 =  B_1 (x_0) \times B_1(v_0) \times (-1,0)$, then 
\[ \sup_{Q_{\frac12}} f \le 1 - \lambda \] for some $\lambda \in
(0,2)$ only depending on dimension and ellipticity constants.
\end{lemma}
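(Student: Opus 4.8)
The statement is the classical ``first De Giorgi lemma'' (decrease of supremum), adapted to the hypoelliptic setting, and I would prove it by the standard De Giorgi energy-iteration combined with the hypoelliptic isoperimetric lemma announced in the abstract. First I would set up the level sets: for $k\in[0,1]$ and a decreasing sequence of cylinders $Q^{(k)}$ interpolating between $Q_1$ and $Q_{1/2}$, consider the truncations $f_k = (f-C_k)_+$ where $C_k = 1-\lambda + \lambda 2^{-k} \nearrow 1-\lambda$, so $f_0 = (f-(1-\lambda))_+$ and the $f_k$ detect only the part of $f$ very close to its (hypothetical) supremum near $1$. Using $f_k \chi^2$ as a test function in \eqref{eq:main} — exactly the energy estimate already carried out in Lemma~\ref{lem:caccio} and Lemma~\ref{lem:caccio-bis} — gives a Caccioppoli inequality controlling $\int |\nabla_v f_k|^2 \chi^2$ and $\sup_t \int f_k^2 \chi^2$ by $C 2^{2k}\int_{Q^{(k-1)}} f_k^2$, where the energy of the level set at level $k$ is bounded by the measure-weighted mass at level $k-1$.

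\textbf{The main step: the isoperimetric/De Giorgi lemma.} The hypothesis $|\{f\le 0\}\cap Q_1|\ge \tfrac12|Q_1|$ gives an initial lower bound on the measure of $\{f_0 = 0\}$, i.e.\ a set of definite measure where the truncation is already zero. The crux — and the step I expect to be the real obstacle — is to upgrade this ``measure of the zero set is nonzero'' into an actual gain at the next truncation level, i.e.\ a quantitative isoperimetric inequality of the form
\[
|\{ f \ge C_{k+1}\}\cap Q^{(k+1)}|^{1-\delta} \lesssim \big(\text{energy of }f_k\big)\cdot |\{C_k < f < C_{k+1}\}\cap Q^{(k)}|^{\delta'}
\]
for some $\delta,\delta'>0$. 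In the purely parabolic De Giorgi argument this is just the isoperimetric/Sobolev inequality in all variables together with the fact that one cannot jump from below $C_k$ to above $C_{k+1}$ without crossing the intermediate strip. Here, however, there is \emph{no} regularity in $x$ for $f$ itself (only $\nabla_v f\in L^2$, upgraded to $L^{2+\eps}$ by Theorem~\ref{thm:l2+eps}), so the ordinary isoperimetric inequality in $(x,v,t)$ is unavailable. The device, as flagged in the abstract, is to run the isoperimetric inequality only in the velocity variable $v$ — where we do have a gradient — and then transport information along the characteristics $\dot x = v$ of the transport operator $\partial_t + v\cdot\nabla_x$ to recover control in $x$ and $t$; the $L^{2+\eps}$ integrability of $\nabla_v f$ is what makes the transfer along these integral curves quantitative. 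I would isolate this as the ``hypoelliptic isoperimetric De Giorgi lemma'' and prove it separately (it is the heart of Section~\ref{sec:osc-decrease}); modulo that lemma, the present proof is a black-box application.

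\textbf{Conclusion by iteration.} Granting the isoperimetric lemma, combine it with the Caccioppoli estimate to obtain a nonlinear recursion for $a_k := \fint_{Q^{(k)}} f_k^2$ (or for the measures of the super-level sets) of the form $a_{k+1}\le C b^k a_k^{1+\beta}$ with $\beta>0$; the factor $b^k$ from the shrinking cut-offs is harmless. By the standard fast-geometric-convergence lemma, if $a_0 = \fint_{Q_1} (f-(1-\lambda))_+^2$ is smaller than an explicit threshold $\varepsilon_0(d,\lambda,\Lambda)$, then $a_k\to 0$, which forces $f\le 1-\lambda$ on $Q_{1/2}$. To verify the smallness of $a_0$, use $|f|\le 1$ so that $a_0\le \fint_{Q_1}\mathbf{1}_{\{f>1-\lambda\}} = |Q_1|^{-1}|\{f>1-\lambda\}\cap Q_1|$; since $|\{f\le 0\}\cap Q_1|\ge \tfrac12|Q_1|$ and $f>1-\lambda$ is one small piece of $\{f>0\}$, choosing $\lambda$ small (equivalently first fixing the De Giorgi threshold, then noting the complement of $\{f\le 0\}$ has measure at most $\tfrac12|Q_1|$ and running one more De Giorgi step from level $0$ to level $1-\lambda$ to bring the bad set below $\varepsilon_0$) makes $a_0<\varepsilon_0$. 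The passage from Lemma~\ref{lem:sup-decrease} to Lemma~\ref{lem:osc-decrease} is then immediate: apply the supremum-decrease either to $f$ or to $-f$, whichever of $\{f\le 0\}$, $\{f\ge 0\}$ occupies at least half of $Q_1$ (one always does, up to replacing $0$ by the median of $f$ on $Q_1$ and rescaling), to get a one-sided improvement $\sup_{Q_{1/2}} f\le 1-\lambda$ or $\inf_{Q_{1/2}}f\ge -1+\lambda$, hence $\osc_{Q_{1/2}} f\le 2-\lambda$.
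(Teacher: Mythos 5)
Your plan contains the right two black-box ingredients — an $L^2$--$L^\infty$ estimate and an ``isoperimetric'' lemma — but the way you wire them together has a genuine gap, and your form of the isoperimetric lemma does not match what the paper can actually prove.

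\textbf{The gap.} You reduce everything to showing $a_0 = \fint_{Q_1}(f-(1-\lambda))_+^2 < \varepsilon_0$, and you assert that this follows by ``choosing $\lambda$ small'' because $a_0 \le |Q_1|^{-1}|\{f>1-\lambda\}\cap Q_1|$. This does not work: the hypothesis $|\{f\le 0\}\cap Q_1|\ge \tfrac12|Q_1|$ only bounds $|\{f>0\}\cap Q_1|$ by $\tfrac12|Q_1|$, and as $\lambda\to 0$ nothing prevents $|\{f>1-\lambda\}\cap Q_1|$ from staying close to $\tfrac12|Q_1|$, which is not below $\varepsilon_0$. You flag this yourself (``running one more De Giorgi step from level $0$ to level $1-\lambda$''), but one step is not enough: the isoperimetric lemma gains only a fixed amount $\alpha$ of measure per application. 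What is actually required — and what the paper does — is to iterate the lemma through the affine rescalings $f_{k+1}=2f_k-1$, noting that $\{f_{k+1}\le 0\}=\{f_k\le \tfrac12\}$ is the disjoint union of $\{f_k\le 0\}$ and $\{0<f_k\le\tfrac12\}$, so each application of Lemma~\ref{lem:isoperim} adds at least $\alpha$ to $|\{f_k\le 0\}\cap Q_1|$; since this quantity is bounded by $|Q_1|$, after a \emph{universal} number $k_0\le |Q_1|/\alpha$ of steps one must have $|\{f_{k_0}\ge 0\}\cap Q_1|<\delta_2:=(4C_0^2)^{-1}$, at which point a single application of Theorem~\ref{thm:sup-sub} (with $|f_{k_0}|\le 1$, hence $\|(f_{k_0})_+\|_{L^2(Q_1)}\le \delta_2^{1/2}$) gives $f_{k_0}\le\tfrac12$ in $Q_{1/2}$ and hence $f\le 1-2^{-k_0-1}$. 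Your scheme of Caccioppoli-iteration with levels $C_k\nearrow 1-\lambda$ and a nonlinear recursion $a_{k+1}\le Cb^ka_k^{1+\beta}$ reproves (via De Giorgi iteration) the $L^2$--$L^\infty$ bound the paper already has from Moser iteration in Section 2, but it does not replace the measure-iteration just described.

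\textbf{The form of the isoperimetric lemma.} You posit a quantitative inequality of classical De Giorgi type, $|\{f\ge C_{k+1}\}|^{1-\delta}\lesssim(\text{energy})\cdot|\{C_k<f<C_{k+1}\}|^{\delta'}$. The paper's Lemma~\ref{lem:isoperim} is weaker and of a different nature: it is a \emph{qualitative} statement (``there exists $\alpha>0$''), proved by a compactness/contradiction argument relying on the $H^s_x$ gain of Theorem~\ref{thm:gain-diff}, the $L^{2+\eps}$ integrability of $\nabla_v f$ from Theorem~\ref{thm:l2+eps}, and averaging-lemma machinery. Crucially (Remark~4.5 in the paper) it holds only for \emph{solutions}, not sub-solutions, because the $x$-regularity needed for compactness fails for sub-solutions. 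A quantitative, energy-weighted inequality in all variables of the kind you write is precisely what is \emph{not} available here, since there is no pointwise-gradient estimate in $x$ for $f$ itself. Treating Lemma~\ref{lem:isoperim} as delivering such an inequality would use a stronger statement than the paper proves. Also a small sign slip: with $C_k=1-\lambda+\lambda 2^{-k}$ the levels \emph{decrease} from $1$ to $1-\lambda$, giving $f_0=(f-1)_+\equiv 0$, which contradicts your own formula $f_0=(f-(1-\lambda))_+$; the levels should increase toward the target.
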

As explained in \cite{vasseur} for instance, this lemma itself is a
consequence of the following one. The details are given in Appendix
for the reader's convenience.
\begin{lemma}[A De Giorgi-type lemma]\label{lem:isoperim}
For all $\delta_1>0$ and $\delta_2>0$, there exists $\alpha >0$ such that for all
solution $f$ of \eqref{eq:main} in $Q_2$ with $|f| \le 1$ and
\[\begin{aligned}
|\{ f \ge \frac12 \} \cap Q_1 | &\ge  \delta_1 \\
|\{ f \le 0 \} \cap Q_1 | & \ge \delta_2
\end{aligned}
\]
we have
\[ |\{ 0 < f < \frac12\} \cap Q_1| \ge \alpha.\]
\end{lemma}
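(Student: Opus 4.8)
The plan is to argue by contradiction, following the classical De Giorgi scheme adapted to the hypoelliptic scaling. Suppose the conclusion fails: then there is a sequence $f_n$ of solutions of \eqref{eq:main} in $Q_2$ with $|f_n|\le 1$, with associated (measurable, elliptic) coefficients $A_n$, such that
\[
|\{ f_n \ge \tfrac12\} \cap Q_1| \ge \delta_1, \qquad
|\{ f_n \le 0\} \cap Q_1| \ge \delta_2, \qquad
|\{ 0 < f_n < \tfrac12\} \cap Q_1| \to 0 .
\]
The key compactness input is the gain of regularity of Theorem~\ref{thm:gain-diff} together with the $L^{2+\eps}$ bound of Theorem~\ref{thm:l2+eps}: from $|f_n|\le1$ we get $\|f_n\|_{H^s_{x,v,t}(Q_{3/2})}\le C$ and $\|\nabla_v f_n\|_{L^{2+\eps}(Q_{3/2})}\le C$. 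Hence, up to a subsequence, $f_n \to f$ strongly in $L^2(Q_{3/2})$ (and a.e.), $\nabla_v f_n \rightharpoonup \nabla_v f$ weakly in $L^{2+\eps}$, and $A_n \rightharpoonup A$ weakly-$*$ in $L^\infty$, with $A$ still satisfying \eqref{eq:ellipticity}. The point of having $\nabla_v f_n$ in $L^{2+\eps}$ rather than merely $L^2$ is that $A_n \nabla_v f_n$ is then equiintegrable, which (this is the trick attributed to Golse in the acknowledgements, and the reason Lemma~\ref{lem:isoperim} needs Theorem~\ref{thm:l2+eps}) lets one pass to the limit in the product $A_n\nabla_v f_n$: combining weak convergence of $A_n$, strong $L^2$ convergence of $f_n$, and equiintegrability, one identifies the limiting flux and concludes that $f$ is a solution of \eqref{eq:main} with limiting coefficient $A$ (a div-curl / compensated-compactness type argument, or a direct test-function argument using that $\nabla_v f_n$ converges strongly in $L^2$ on the set where the flux matters).

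Next I would extract the contradiction from the limit. By a.e.\ convergence and Fatou, the limit $f$ satisfies $|\{f\ge\tfrac12\}\cap Q_1|\ge\delta_1$ and $|\{f\le0\}\cap Q_1|\ge\delta_2$, while $|\{0<f<\tfrac12\}\cap Q_1|=0$: thus $f$ takes (a.e.\ in $Q_1$) only values in $\{f\le 0\}\cup\{f\ge\tfrac12\}$, and both of these sets have positive measure. Now consider the truncation $g := \min\{(f)_+,\tfrac12\}$, equivalently the function that is $0$ where $f\le0$, $f$ where $0\le f\le\tfrac12$, $\tfrac12$ where $f\ge\tfrac12$. Since the set $\{0<f<\tfrac12\}$ is null, we have $\nabla_v g = 0$ a.e.: indeed $\nabla_v g = \nabla_v f\,\mathbf 1_{\{0<f<1/2\}}$ a.e., which vanishes. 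So $g$ is a bounded solution-ingredient with $\nabla_v g\equiv 0$, i.e.\ $g=g(t,x)$ is independent of $v$ in the weak sense on $Q_1$; plugging $\nabla_v g=0$ into (the weak form of) the equation, $g$ also solves $(\partial_t + v\cdot\nabla_x) g = 0$ in the distributional sense. The structure of the transport operator forces $g$ to be constant along the characteristics $s\mapsto (x_0+sv_0, v_0)$; since $g$ is simultaneously $v$-independent, a standard argument (a Fubini/mollification argument using that the characteristic flow of $v\cdot\nabla_x$ sweeps out all of a sub-cylinder of $Q_1$ when one also varies $v$) shows $g$ must be a.e.\ constant on a full-measure subcylinder of $Q_1$. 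But $g$ takes the value $0$ on a set of measure $\ge\delta_2>0$ and the value $\tfrac12$ on a set of measure $\ge\delta_1>0$ inside $Q_1$ — contradiction.

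The main obstacle, and the step that needs the most care, is the passage to the limit in the equation: one must rule out that the weak limits of the nonlinear quantity $A_n\nabla_v f_n$ fail to equal $A\nabla_v f$. This is exactly where the $L^{2+\eps}$ gain from Theorem~\ref{thm:l2+eps} is used — it gives equiintegrability (hence, on the relevant set, no concentration) of $\nabla_v f_n$, and then a compensated-compactness / Aubin–Lions argument closes the loop; the subtlety is that $A_n$ only converges weakly-$*$, so one genuinely needs the strong $L^2$ convergence of $f_n$ (to control the lower-order terms after integration by parts) combined with the improved integrability of $\nabla_v f_n$, rather than being able to handle $A_n\nabla_v f_n$ directly. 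A secondary technical point is the "$v$-independent transport solution must be locally constant" step: here one should be careful that $g$ is only a weak (distributional) solution, so the argument identifying it as constant should be done via mollification in $(x,v,t)$, using that the slanted/sheared cylinder geometry of Section~\ref{sec:l2+eps} is adapted to the characteristics; alternatively one can invoke a uniqueness statement for the free transport equation with $v$-independent data. Everything else — the compactness bounds, Fatou, and the measure-theoretic bookkeeping with $\delta_1,\delta_2$ — is routine given the theorems already established.
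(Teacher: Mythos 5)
The overall architecture (compactness and contradiction, truncation, $v$-independence of the truncated limit, transport equation forces a constant) matches the paper, but there is a genuine gap at the central step: you truncate the \emph{limit} $f$ to produce $g=\min\{f_+,\tfrac12\}$ and then write ``plugging $\nabla_v g=0$ into (the weak form of) the equation, $g$ also solves $(\partial_t+v\cdot\nabla_x)g=0$.'' There is no equation for $g$: $g$ is a nonlinear Lipschitz function of $f$, not $f$ itself, and nonlinear truncations of solutions are not solutions. Formally the chain rule gives
\[
(\partial_t+v\cdot\nabla_x)\,T(f)=\nabla_v\cdot\bigl(A\nabla_v T(f)\bigr)-T''(f)\,A\nabla_v f\cdot\nabla_v f,
\]
and the extra term does not disappear by simply noting $\nabla_v g=0$; with your kink truncation $T''$ is moreover a measure, which makes this term uncontrollable. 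The paper avoids this by truncating the \emph{sequence}: it sets $\bar f_k=T(f_k)$ with $T$ smooth, derives the above identity for $\bar f_k$ (where $T''$ is bounded), and shows the extra term vanishes as $k\to\infty$ because $T''(f_k)$ is supported on $B_k=\{0<f_k<\tfrac12\}\cap Q_1$, $|B_k|\to0$, and $\nabla_v f_k$ is uniformly $L^{2+\eps}$. This is precisely where Theorem~\ref{thm:l2+eps} is used — to kill the truncation remainder term — not, as you suggest, to identify a limiting flux $A\nabla_v f$ via compensated compactness.

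Your Step 3 is also not achievable as stated, and more importantly is unnecessary. With $A_n$ only converging weak-$*$ and $\nabla_v f_n$ only weakly, there is no reason the limit of $A_n\nabla_v f_n$ should equal $A\nabla_v f$ for $A$ the weak-$*$ limit; this is exactly the homogenization obstruction, and equiintegrability alone does not fix it. The paper sidesteps this entirely: it only needs the limit flux $\bar h$ of $A_k\nabla_v\bar f_k$ to be \emph{zero}, which is much weaker than identifying it. This follows from the test-function manipulation in the ``Identification of $\bar h$'' step, using the strong $L^2$ convergence of $\bar f_k$, the weak convergence of $\bar h_k$, $\nabla_v\bar f=0$, and the ellipticity lower bound to squeeze $\int|\bar h|^2\phi\le\Lambda\lim\int\bar h_k\cdot\nabla_v\bar f_k\,\phi=0$. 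In short: truncate before passing to the limit, and aim to show the limit flux vanishes rather than to identify it.
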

\begin{remark}
  It is important to emphasize that the lemma is stated for solutions
  of \eqref{eq:main}, not sub-solutions. 
\end{remark}
\begin{remark}
  The idea of proving such a generalization of the classical
  isoperimetric lemma of De Giorgi is reminiscent of an argument of
  Guo \cite{guo}.  See also the very nice survey by Vasseur
  \cite{vasseur}.
\end{remark}
\begin{proof}
  We argue by contradiction by assuming that there exists a sequence
  $f_k$ of solutions of \eqref{eq:main} for some diffusion matrix
  $A_k$ such that $|f_k| \le 1$ and
\[
\begin{aligned}
|\{ f_k \ge \frac12 \} \cap Q_1 | &\ge  \delta_1 \\
|\{ f_k \le 0 \} \cap Q_1 | & \ge \delta_2 \\
|\{ 0 < f_k < \frac12\} \cap Q_1| & \to 0 \quad \text{ as } k \to +\infty. 
\end{aligned}
\]

\smallskip

\noindent
\textbf{Compactness in $L^2$.}
Since the sequence $f_k$ is bounded in $L^2(Q_2)$,
Theorem~\ref{thm:gain} implies that it is relatively compact in
$L^2(Q_1)$ for any $Q_1 \Subset Q_2$. With thus can assume that
$f_k$ converges in $L^2(Q_1)$ towards $f$ as $k \to
+\infty$. In particular,  it satisfies
\begin{eqnarray}
\nonumber |\{ f \ge \frac12 \} \cap Q_1 | &\ge  \frac{\delta_1}2 \\
\nonumber |\{ f \le 0 \} \cap Q_1 | & \ge \frac{\delta_2}2 \\
\label{eq:nulset} |\{ 0 < f < \frac12\} \cap Q_1| & =0.
\end{eqnarray}
Moreover, the natural energy estimate for solutions of \eqref{eq:main}
implies that $f \in L^2_{t,x} H^1_v$ by weak limit. Hence, by the
classical de Giorgi isoperimetric inequality, for almost every
$(t,x) \in B_1(x_0) \times (-1,0)$, we have
\[ \left\{
\begin{aligned}
 \text{either } \quad  &  \mbox{for almost every }  v \in B_1(v_0), \quad  f(t,x,v) \le 0 \\
\text{ or } \quad   \quad & \mbox{for almost every } v\in B_1 (v_0), \quad  f(t,x,v) \ge \frac12.
\end{aligned}
\right.
\]
\smallskip

\noindent
\textbf{Truncation.}  Consider now a smooth non-decreasing function
$T: [-1,1] \to \R$ such that $T \equiv 0$ in $[-1,0]$ and
$T \equiv \frac12$ in $[\frac12,1]$.  We have that
$\bar{f}_k = T (f_k)$ satisfies $\bar{f}_k \to \bar f$ in $L^2(Q_1)$
such that
\[ 
\left\{
\begin{aligned}
 \text{either } \quad  &  \mbox{for almost every }  v \in B_1(v_0), \quad \bar f(t,x,v) = 0 \\
\text{ or } \quad   \quad & \mbox{for almost every } v\in B_1 (v_0), \quad \bar f(t,x,v) = \frac12.
\end{aligned}
\right.
\]
In particular, 
\[ 
\nabla_v \bar f = 0 \text{ in } L^2(Q_1)
\] 
i.e. the function is everywhere a \emph{local equilibrium} in the
terminology of kinetic theory. Hence,
\[ \bar f (t,x,v) = \bar f (t,x) \in \{0,\frac12\} \]
and 
\begin{equation}
\label{eq:measures}
\left\{\begin{aligned}
| \{ \bar f = \frac12 \} \cap  B_1 \times (-1,0)| \ge \frac{\delta_1}{|B_1|} \\
| \{ \bar f = 0 \} \cap B_1 \times (-1,0) | \ge \frac{\delta_2}{|B_1|} 
\end{aligned}\right.
\end{equation}
 \smallskip

\noindent
\textbf{Passage to the limit.}
The function $\bar{f}_k$ satisfies in $Q_1$,
\begin{equation}\label{eq:k}
\partial_t \bar{f}_k + v \cdot \nabla_x \bar{f}_k = \nabla_v \cdot
(A_k \nabla_v \bar{f}_k) - T''(f_k) A_k
\nabla_v f_k \cdot \nabla_v f_k.
\end{equation}
For a test function $\phi$ supported in $Q_1$, we can write
\[ 
\left| \int T''(f_k) A_k \nabla_v f_k \cdot \nabla_v \tilde{f}_k \phi \right| \le \Lambda
\|T''\|_\infty \|\phi\|_\infty \int_{B_k} |\nabla_v f_k|^2
\]
where 
\[ B_k = \{ 0 < \tilde{f}_k < \frac12 \} \cap Q_1.\]
In view of \eqref{eq:nulset}, we know that $|B_k|\to 0$ as
$k \to +\infty$.  In view of Theorem~\ref{thm:l2+eps}, this implies
that
\begin{equation}\label{eq:k2}
  \int T''(f_k) A_k \nabla_v f_k \cdot \nabla_v f_k \phi \to 0 \quad \text{ as } n \to
  +\infty.
\end{equation}
We also know that $\nabla \bar{f}_k$ is bounded in $L^2(Q_1)$. Hence,
we can assume that 
\begin{equation} \label{eq:k3}
 \bar h_k := A_k \nabla_v \bar f_k \rightharpoonup \bar h \quad 
\text{ in } L^2(Q_1).
\end{equation}
In view of \eqref{eq:k}, \eqref{eq:k2} and \eqref{eq:k3}, we thus have
\begin{equation}\label{eq:barf}
 (\partial_t + v \cdot \nabla_x) \bar f = \nabla_v \bar h.
\end{equation}
\smallskip

\noindent
\textbf{Identification of $\bar h$.}  Given $\phi \in \mathcal{D}
(Q_1)$, we can on one hand use $\bar f \phi$ as a test function in
\eqref{eq:barf} and get after integrating in all variables,
\[ \frac12 \int (\bar f)^2 (\partial_t + v\cdot \nabla_x) \phi = \int 
\bar h \nabla_v (\bar f \phi).
\]
On the other hand, we can use $\bar f_k \phi$ as a test function in 
\eqref{eq:k} and get at the limit
\[ \frac12 \int (\bar f)^2 (\partial_t + v\cdot \nabla_x) \phi = \lim_{k\to+\infty} \int \bar h_k \cdot 
\nabla_v (\bar f_k \phi).\]
In particular,
\[\int \bar h \nabla_v (\bar f \phi)= \lim_{k\to+\infty} \int \bar h_k \cdot 
\nabla_v (\bar f_k \phi).\]
Since $f_k \to f$ strongly in $L^2$ we have 
\[
 \lim_{k\to+\infty} \int \bar h_k \cdot \bar f_k \nabla_v \phi =
 \int \bar h \cdot \bar f \nabla_v \phi.
\]
and then since $\nabla_v \bar f=0$, this implies
\[ \lim_{k\to+\infty} \int \bar h_k \cdot \nabla_v \bar f_k \phi =0.\]
Hence, for $\phi \ge 0$, 
\begin{align*}
\int |\bar h|^2 \phi & \le \liminf_{k \to +\infty} \int |\tilde{A}_k \nabla_v \bar f_k|^2 \phi \\
& \le \Lambda \lim_{k \to +\infty} \int \tilde{A}_k \nabla_v \bar f_k \cdot \nabla_v \bar f_k \phi \\
& \le \Lambda \lim_{k \to +\infty} \int \bar h_k \cdot \nabla_v \bar f_k \phi=0.
\end{align*}
which implies that $\bar h=0$. 
\smallskip

\noindent
\textbf{Conclusion.} We deduce that 
\begin{align*}
\text{for a.e. } v \in B_1(0), \quad  \partial_t \bar f + (v_0+v) \cdot \nabla_x \bar f = 0 \quad \text{ in }  B_1 \times (-1,0).
\end{align*}
In particular, rewriting the equation for $-v$, summing and using all $v \in B_1 (0)$, we get 
\[ \partial_t f + v_0 \cdot \nabla_x f \equiv 0, \nabla_x f \equiv
0 \]
which, in turn, yields that $f$ is constant (i.e. is a \emph{global
  equilibrium} in the terminology of kinetic theory), which contradicts the
lower bounds on the measure of the sets above. We thus get the desired
contradiction. The proof is complete.
\end{proof}

\appendix

\section{Isoperimetric lemma implies decrease
of the upper bound}

\begin{proof}[Proof of Lemma~\ref{lem:sup-decrease}]
  We follow the nice exposition of \cite{vasseur}.  Let $C_0$ be the
  universal constant such that solutions $f$ of \eqref{eq:main} in
  $Q_2$ satisfy
\[ \| f_+ \|_{L^\infty(Q_{\frac12})} \le C_0 \|f_+\|_{L^2(Q_1)}.\]
We now define $f_1 =f$ and $f_{k+1} = 2 f_k -1$. Remark that 
\begin{align*}
|\{ f_1 \le 0 \} \cap Q_1 | &\ge \delta_1 \\
\{ f_{k+1} \le 0 \}  &\supset \{ f_k \le 0 \} 
\end{align*}
with $\delta_1 = |Q_1|/2$ (remark it is universal). 
Our goal is to prove that there exists $k_0$ universal such that 
\[ |\{f_{k_0} \ge 0 \} \cap Q_1 | \le \delta_2 \]
with $\delta_2 = (4  C_0^2)^{-1}$ (remark it is universal). 
Indeed, this implies 
\[ \| (f_{k_0})_+ \|_{L^\infty(Q_{\frac12})} \le C_0 \|(f_{k_0})_+\|_{L^2(Q_1)} 
\le C_0 \bigg[ |\{ f_{k_0} \ge 0 \} \cap Q_1 | \bigg]^{\frac12} \le \frac12 \]
which, in turn, yields 
\[ f \le 1 - 2^{-k_0-1} \quad \text{ in } Q_{\frac12}.\]

Assume that for all $k \ge 1$, 
\[ |\{f_k \ge 0 \} \cap Q_1 | \ge \delta_2 .\]
Since $f_{k+1} = 2 f_k -1$, this also implies
\[ |\{f_k \ge \frac12 \} \cap Q_1 | \ge \delta_2 .\]
But we also have 
\[ |\{ f_k \le 0 \} \cap Q_1 | \ge |\{f \le 0 \} \cap Q_1 | \ge \delta_1.\]
Hence Lemma~\ref{lem:isoperim} implies that 
\[ |\{ 0 \le f_k \le \frac12 \} \cap Q_1 | \ge \alpha.\]

Now remark that 
\begin{align*}
|Q_1 | \ge  |\{f_{k+1} \le 0 \} \cap Q_1 | & = |\{f_k \le 0 \} \cap Q_1 | + |\{0 \le f_k \le \frac12 \} \cap Q_1 | \\
& \ge |\{f_k \le 0 \} \cap Q_1 | + \alpha \\
& \ge  k\alpha
\end{align*}
which is impossible for $k$ large enough. 
\end{proof}

\bibliographystyle{acm}
\bibliography{ndgh}

\signci 

\signcm 

\end{document}